\documentclass[11pt]{article}
\usepackage[T1]{fontenc}
\usepackage{amsfonts}
\usepackage{amsmath}
\usepackage{amssymb}
\usepackage{amsthm}
\usepackage{bbm}
\usepackage{bm}
\usepackage{mathrsfs}
\usepackage{verbatim}
\usepackage{setspace}
\usepackage{color}
\usepackage{enumitem}

\theoremstyle{plain}
\newtheorem{theorem}{Theorem}[section]
\newtheorem{proposition}[theorem]{Proposition}
\newtheorem{lemma}[theorem]{Lemma}

\theoremstyle{definition}

\newtheorem{remark}[theorem]{Remark}

\newtheorem{example}[theorem]{Example}

\newtheorem{assumption}[theorem]{Assumption}
\theoremstyle{remark}

\renewenvironment{thebibliography}[1]{%
\begin{oldthebibliography}{#1}%
\setlength{\baselineskip}{.9em}
\linespread{1}%
\small
\setlength{\parskip}{0ex}%
\setlength{\itemsep}{.2em}%
}%
{%
\end{oldthebibliography}%
}
\newcommand{\q}{\quad}

\newcommand{\eps}{\varepsilon}

\newcommand{\N}{\mathbb{N}}

\newcommand{\Q}{\mathbb{Q}}
\newcommand{\R}{\mathbb{R}}

\newcommand{\cA}{\mathcal{A}}
\newcommand{\cB}{\mathcal{B}}

\newcommand{\cE}{\mathcal{E}}
\newcommand{\cF}{\mathcal{F}}
\newcommand{\cG}{\mathcal{G}}
\newcommand{\cH}{\mathcal{H}}

\newcommand{\cP}{\mathcal{P}}

\newcommand{\bD}{\mathbf{D}}

\newcommand{\fX}{\mathfrak{X}}

\DeclareMathOperator{\dist}{dist}

\DeclareMathOperator{\tr}{Tr}

\DeclareMathOperator{\esssup}{ess\, sup}

\newcommand{\as}{\mbox{-a.s.}}

\newcommand{\1}{\mathbf{1}}

\newcommand{\br}[1]{\langle #1 \rangle}

\newcommand{\bomega}{\bar{\omega}}

\newcommand{\tomega}{\tilde{\omega}}
\newcommand{\fPO}{\mathfrak{P}(\Omega)}
\newcommand{\fM}{\mathfrak{M}}
\newcommand{\fMa}{\mathfrak{M}_a}
\newcommand{\qfaq}{\quad\mbox{for all}\quad}

\numberwithin{equation}{section}

\usepackage[pdfborder={0 0 0}]{hyperref}
\hypersetup{
  urlcolor = black,
  pdfauthor = {Marcel Nutz, Ramon van Handel},
  pdfkeywords = {Sublinear expectation; G-expectation; random G-expectation; Time-consistency; Optional sampling; Dynamic programming; Analytic set},
  pdftitle = {Constructing Sublinear Expectations on Path Space},
  pdfsubject = {Constructing Sublinear Expectations on Path Space},
  pdfpagemode = UseNone
}

\begin{document}
\title{
\mbox{Constructing Sublinear Expectations on Path Space}
\date{January 21, 2013}
\author{
  Marcel Nutz%
  \thanks{
  Dept.\ of Mathematics, Columbia University, New York.
 mnutz@math.columbia.edu\newline
\indent~\,
  The work of MN is partially supported by NSF grant DMS-1208985.}
  \and
  Ramon van Handel%
  \thanks{
  Sherrerd Hall rm.\ 227, Princeton University, Princeton.
rvan@princeton.edu\newline
\indent~\,
  The work of RvH is partially supported by NSF grant DMS-1005575.}
  }
}
\maketitle \vspace{-1em}

\begin{abstract}
We provide a general construction of time-consistent sublinear expectations on the space of continuous paths.
It yields the existence of the conditional $G$-expectation of a Borel-measurable (rather than quasi-continuous) random variable, a generalization of the random $G$-expectation, and an optional sampling theorem that holds without exceptional set. Our results also shed light on the inherent limitations to
constructing sublinear expectations through aggregation.
\end{abstract}

\vspace{.9em}

{\small
\noindent \emph{Keywords} Sublinear expectation; $G$-expectation; random $G$-expectation; Time-consistency; Optional sampling; Dynamic programming; Analytic set

\noindent \emph{AMS 2000 Subject Classification}
93E20; %
60H30; %
91B30;   %
28A05  %
}\\

\section{Introduction}\label{se:intro}

We study sublinear expectations on the space $\Omega=C_0(\R_+,\R^d)$ of continuous paths. Taking the dual point of view, we are interested in mappings
\[
  \xi \mapsto \cE_0(\xi)=\sup_{P\in\cP}E^P[\xi],
\]
where $\xi$ is a random variable and $\cP$ is a set of probability measures, possibly non-dominated. In fact, any sublinear expectation with certain continuity properties is of this form (cf.\ \cite[Sect.\,4]{FollmerSchied.04}). Under appropriate assumptions on $\cP$, we would like to construct a conditional expectation $\cE_\tau(\xi)$ at any stopping time $\tau$ of the
the filtration $\{\cF_t\}$ generated by the canonical process $B$ and establish the tower property
\begin{equation}\label{eq:towerIntro}
  \cE_\sigma(\cE_\tau(\xi))=\cE_\sigma(\xi)\quad\mbox{for stopping times}\quad \sigma\leq \tau,
\end{equation}
a property also known as time-consistency in this context. While it is not clear \emph{a priori} what to call a conditional expectation, a sensible requirement for $\cE_\tau(\xi)$ is to satisfy
\begin{equation}\label{eq:aggreg}
  \cE_\tau(\xi)= \mathop{\esssup^P}_{P'\in \cP(\tau;P)} E^{P'}[\xi|\cF_\tau]\quad P\as \quad\mbox{for all } P\in\cP,
\end{equation}
where $\cP(\tau;P)=\{P'\in\cP:\, P'=P\mbox{ on }\cF_\tau\}$;
see the related representations in \cite{SonerTouziZhang.2010dual, SonerTouziZhang.2010bsde}. This determines $\cE_\tau(\xi)$ up to polar sets---the measures in $\cP$ may be mutually singular---and corresponds, under a fixed $P\in\cP$, to the representations that are well known from the theory of risk measures (e.g., \cite{FollmerSchied.04}).
However, it is far from clear that one can in fact construct a random
variable $\cE_\tau(\xi)$ such that the property~\eqref{eq:aggreg} holds;
this is the aggregation problem.
Severe restrictions are necessary to construct $\cE_\tau(\xi)$ directly by
gluing together the right hand sides in~\eqref{eq:aggreg}; cf.\
\cite{Cohen.11,SonerTouziZhang.2010aggreg}.  We shall use a
different starting point, which will lead both to a general construction
of the conditional expectations $\cE_\tau(\xi)$ (Theorem~\ref{th:dpp})
and to insight on the inherent limitations to the aggregation problem
\eqref{eq:aggreg} (Section \ref{se:counterex}).

The main examples we have in mind are related to volatility uncertainty, where each $P\in\cP$ corresponds to a possible scenario for the volatility $d\br{B}_t/dt$. Namely, we shall consider the $G$-expectation~\cite{Peng.07, Peng.08} and its generalization to the ``random $G$''-expectation~\cite{Nutz.10Gexp}, where the range of possible volatilities is described by a random set $\bD$.
However, our general construction is much more broadly applicable;
for example, value functions of standard control problems
(under a given probability measure) can often be seen as sublinear
expectations on $\Omega$ by a push-forward, that is, by taking $\xi$ to be the reward functional and $\cP$ the set of possible laws of the controlled process (e.g., \cite{Nutz.11, Peng.04}).

Our starting point is a family of sets $\cP(\tau,\omega)$ of probability measures, where $\tau$ is a stopping time and $\omega\in\Omega$, satisfying suitable properties of measurability, invariance, and stability under pasting (Assumption~\ref{as:invarianceAndPasting}).
Roughly speaking, $\cP(\tau,\omega)$ represents all possible conditional laws of the
increments of the canonical process after time $\tau(\omega)$.
Taking inspiration from~\cite{SonerTouziZhang.2010dual}, we then define
\[
  \cE_\tau(\xi)(\omega):=\sup_{P\in\cP(\tau,\omega)} E^P[\xi^{\tau,\omega}],\quad\omega\in\Omega
\]
with $\xi^{\tau,\omega}(\omega'):=\xi(\omega\otimes_\tau\omega')$,
where $\omega\otimes_\tau\omega'$ denotes the path that equals
$\omega$ up to time $\tau(\omega)$ and whose increments after time
$\tau(\omega)$ coincide with $\omega'$.
Thus, $\cE_\tau(\xi)$ is defined for every single $\omega\in\Omega$, for any Borel-measurable (or, more generally, upper semianalytic) random variable $\xi$.
While $\cE_\tau(\xi)$ need not be Borel-measurable in general, we
show using the classical theory of analytic sets that $\cE_\tau(\xi)$
is always upper semianalytic (and therefore \emph{a fortiori}
universally measurable), and that it satisfies the
requirement~\eqref{eq:aggreg} and the tower
property~\eqref{eq:towerIntro}; cf.\ Theorem~\ref{th:dpp}.
We then show that our general result applies in the settings of
$G$-expectations and random $G$-expectations (Sections~\ref{se:Gexp} and
\ref{se:randomG}). Finally, we demonstrate that even in the fairly
regular setting of $G$-expectations, it is indeed necessary to consider
semianalytic functions: the conditional expectation of a Borel-measurable
random variable $\xi$ need not be Borel-measurable, even modulo a
polar set (Section~\ref{se:counterex}).

To compare our results with the previous literature, let us recall that
the $G$-expectation has been studied essentially with three different
methods: limits of PDEs \cite{Peng.07, Peng.08, Peng.10}, capacity theory
\cite{DenisHuPeng.2010, DenisMartini.06}, and the stochastic control
method of \cite{SonerTouziZhang.2010dual}. All these works start with very
regular functions $\xi$ and end up with random variables that are
quasi-continuous and results that hold up to polar sets (a random
variable is called quasi-continuous if it satisfies the Lusin property
uniformly in $P\in\cP$; cf.\ \cite{DenisHuPeng.2010}). Stopping times,
which tend to be discontinuous functions of $\omega$, could not be treated
directly (see \cite{LiPeng.09, NutzSoner.10, Song.11} for related partial
results) and the existence of conditional $G$-expectations beyond
quasi-continuous random variables remained open. We recall that not all
Borel-measurable random variables are quasi-continuous: for example, the
main object under consideration, the volatility of the canonical process,
is not quasi-continuous \cite{Song.12}. Moreover, even given a
quasi-continuous random variable $\xi$ and a closed set $C$, the indicator function
of $\{\xi\in C\}$ need not be quasi-continuous (cf.\ Section~\ref{se:counterex}), so that conditional
``$G$-probabilities'' are outside the scope of previous constructions.

The approach in the present paper is purely measure-theoretic and allows to treat general random variables and stopping times. Likewise, we can construct random $G$-expectations when $\bD$ is merely measurable, rather than satisfying an ad-hoc continuity condition as in~\cite{Nutz.10Gexp}; this is important since that condition did not allow to specify $\bD$ directly in terms of the observed historical volatility. Moreover, our method yields results that are more precise, in that they hold for every $\omega$ and not up to polar sets. In particular, this allows us to easily conclude that $\cE_\tau(\xi)$ coincides with the process $t\mapsto \cE_t(\xi)$ sampled at $\tau$, so that~\eqref{eq:towerIntro} may be seen as the optional sampling theorem for that nonlinear martingale (see \cite{NutzSoner.10} for a related partial result).

\section{General Construction}\label{se:main}

\subsection{Notation}

Let us start by cautioning the reader that our notation differs from the one in some related works in that we shall be shifting paths rather than the related function spaces. This change is necessitated  by our treatment of stopping times.

Let $\Omega=C_0(\R_+,\R^d)$ be the space of continuous paths $\omega=(\omega_u)_{u\geq0}$ in $\R^d$ with $\omega_0=0$ (throughout this section, $\R^d$ can be replaced by a
separable Fr\'echet space). We equip $\Omega$ with the topology of locally uniform convergence and denote by $\cF$ its Borel $\sigma$-field. Moreover, we denote by $B=\{B_u(\omega)\}$ the canonical process and by $(\cF_u)_{u\geq0}$ the (raw) filtration generated by $B$. Furthermore, let $\fPO$ be the set of all probability measures on $\Omega$, equipped with the topology of weak convergence; i.e., the weak topology induced by the bounded continuous functions on $\Omega$.
For brevity, ``stopping time'' will refer to a \emph{finite} (i.e.,
$[0,\infty)$-valued) $(\cF_u)$-stopping time throughout this paper.
We shall use various classical facts about processes on canonical spaces (see \cite[Nos.\,IV.94--103, pp.\,145--152]{DellacherieMeyer.78} for related background); in particular,
Galmarino's test: An $\cF$-measurable function $\tau: \Omega\to \R_+$ is a
stopping time if and only if $\tau(\omega)\leq t$ and
$\omega|_{[0,t]}=\omega'|_{[0,t]}$ imply $\tau(\omega)=\tau(\omega')$. Moreover,
given a stopping time $\tau$, an $\cF$-measurable function
$f$ is $\cF_\tau$-measurable if and only if
$f=f\circ\iota_\tau$, where $\iota_\tau:\Omega\to\Omega$ is the
stopping map $(\iota_\tau(\omega))_t=\omega_{t\wedge\tau(\omega)}$.

Let $\tau$ be a stopping time. The concatenation of $\omega, \tomega\in \Omega$ at $\tau$ is the path
\[
 (\omega\otimes_\tau \tomega)_u := \omega_u \1_{[0,\tau(\omega))}(u) + \big(\omega_{\tau(\omega)} + \tomega_{u-\tau(\omega)}\big) \1_{[\tau(\omega), \infty)}(u),\quad u\geq 0.
\]
Given a function $\xi$ on $\Omega$ and $\omega\in\Omega$,
we define the function $\xi^{\tau,\omega}$ on $\Omega$ by
\[
  \xi^{\tau,\omega}(\tomega) :=\xi(\omega\otimes_\tau \tomega),\q \tomega\in\Omega.
\]
We note that $\omega\mapsto \xi^{\tau,\omega}$ depends only on $\omega$
up to time $\tau(\omega)$;
that is, if $\omega=\omega'$ on $[0,\tau(\omega)]$, then $\xi^{\tau,\omega}=\xi^{\tau,\omega'}$
(and $\tau(\omega)=\tau(\omega')$ by Galmarino's test).
Let $\sigma$ be another stopping time such that $\sigma\leq\tau$ and let $\omega\in\Omega$. Then
\[
  \theta:=(\tau-\sigma)^{\sigma,\omega}=\tau(\omega\otimes_\sigma \cdot )- \sigma(\omega)
\]
is again a stopping time; indeed, with $s:=\sigma(\omega)$, we have
\[
  \{\theta\leq t\}=\{\tau(\omega\otimes_s \cdot) \leq t+s\}\in \cF_{t+s-s}=\cF_t,\quad t\geq0.
\]

For any probability measure $P\in\fPO$, there is a regular conditional
probability distribution $\{P^\omega_\tau\}_{\omega\in\Omega}$
given $\cF_\tau$.
That is, $P^\omega_\tau\in\fPO$ for each $\omega$, while $\omega\mapsto
P^\omega_\tau(A)$ is $\cF_\tau$-measurable for any $A\in\cF$ and
\[
  E^{P^\omega_\tau}[\xi]=E^P[\xi|\cF_\tau](\omega)\quad \mbox{for}\quad P\mbox{-a.e.}\;\omega\in\Omega
\]
whenever $\xi$ is $\cF$-measurable and bounded.
Moreover, $P^\omega_\tau$ can be chosen to be concentrated on the
set of paths that coincide with $\omega$ up to time $\tau(\omega)$,
\begin{equation*}%
  P^\omega_\tau\big\{\omega'\in \Omega: \omega' = \omega \mbox{ on } [0,\tau(\omega)]\big\} = 1
  \quad \mbox{for all }\omega\in\Omega;
\end{equation*}
cf.\ \cite[p.\,34]{StroockVaradhan.79}.
We define the probability measure $P^{\tau,\omega}\in \fPO$ by
\[
  P^{\tau,\omega}(A):=P^\omega_\tau(\omega\otimes_\tau A),\quad A\in \cF, \quad\mbox{where }\omega\otimes_\tau A:=\{\omega\otimes_\tau \tomega:\, \tomega\in A\}.
\]
We then have the identities
\[
  E^{P^{\tau,\omega}}[\xi^{\tau,\omega}]=E^{P^\omega_\tau}[\xi] =E^P[\xi|\cF_\tau](\omega) \quad \mbox{for}\quad P\mbox{-a.e.}\;\omega\in\Omega.
\]

To avoid cumbersome notation, it will be useful to define integrals for all measurable functions $\xi$ with values in the extended real line $\overline{\R}=[-\infty,\infty]$. Namely, we set
\[
  E^P[\xi]:=E^P[\xi^+]-E^P[\xi^-]
\]
if $E^P[\xi^+]$ or $E^P[\xi^-]$ is finite, and we use the convention
\[
 E^P[\xi]:=-\infty \quad\mbox{if}\quad E^P[\xi^+]=E^P[\xi^-]=+\infty.
\]
The corresponding convention is used for the conditional expectation with respect to a $\sigma$-field $\cG\subseteq \cF$; that is, $E^P[\xi|\cG]=E^P[\xi^+|\cG]-E^P[\xi^-|\cG]$ $P$-a.s.\ on the set where $E^P[\xi^+|\cG]$ or $E^P[\xi^-|\cG]$ is finite, and $E^P[\xi|\cG]=-\infty$ on the complement.

Next, we recall some basic definitions from the theory of analytic sets; we refer to
\cite[Ch.\,7]{BertsekasShreve.78} or \cite[Ch.\,8]{Cohn.80}
for further background. A subset of a Polish space is called analytic if it is the image of a Borel subset of another Polish space under a Borel-measurable mapping.
In particular, any Borel set is analytic. The collection of analytic sets is stable under countable intersections and unions, but in general not under complementation. The $\sigma$-field $\cA$ generated by the analytic sets is called the analytic $\sigma$-field and $\cA$-measurable functions are called analytically measurable.
Moreover, given a $\sigma$-field $\cG$ on any set, the universal completion of $\cG$ is the $\sigma$-field $\cG^*=\cap_P \cG^P$, where $P$ ranges over all probability measures on $\cG$ and $\cG^P$ is the completion of $\cG$ under $P$. If $\cG$ is the Borel $\sigma$-field of a Polish space, we have the inclusions
\[
  \cG\,\subseteq\, \cA \,\subseteq\, \cG^* \,\subseteq\, \cG^P
\]
for any probability measure $P$ on $\cG$. Finally, an $\overline{\R}$-valued function $f$ is called upper semianalytic if $\{f>c\}$ (or equivalently $\{f\geq c\}$) is analytic for each $c\in\R$.
In particular, any Borel-measurable function is upper semianalytic, and any upper semianalytic function is analytically and universally measurable.

Finally, note that since $\Omega$ is a Polish space, $\fPO$ is again
a Polish space \cite[Prop.\,7.20, p.\,127 and Prop.\,7.23,
p.\,131]{BertsekasShreve.78}, and so is the product $\fPO\times\Omega$.

\subsection{Main Result}

For each $(s,\omega)\in \R_+\times\Omega$, we fix a set $\cP(s,\omega)\subseteq \fPO$. We assume that these sets are adapted in that
\[
  \cP(s,\omega)=\cP(s,\tomega)\quad\mbox{if}\quad \omega|_{[0,s]}=\tomega|_{[0,s]}.
\]
In particular, the set $\cP(0,\omega)$ is independent of $\omega$ (since
all paths start at zero) and we shall denote it by $\cP$. We assume
throughout that $\cP\neq\varnothing$. If $\sigma$ is a stopping time, we
set
\[
  \cP(\sigma,\omega):=\cP(\sigma(\omega),\omega).
\]
The following are the conditions for our main result.

\begin{assumption}\label{as:invarianceAndPasting}
  Let $s\in\R_+$, let $\tau$ be a stopping time such  that $\tau\geq s$, let $\bomega\in\Omega$ and $P\in \cP(s,\bomega)$. Set $\theta:=\tau^{s,\bomega}-s$.
  \begin{enumerate}[topsep=3pt, partopsep=0pt, itemsep=1pt,parsep=2pt]
    \item \emph{Measurability:} The graph $\{(P',\omega): \omega\in\Omega,\; P'\in \cP(\tau,\omega)\} \,\subseteq\, \fPO\times\Omega$ is analytic.

    \item \emph{Invariance:} We have $P^{\theta,\omega} \in\cP(\tau,\bomega\otimes_s\omega)$ for $P$-a.e.\ $\omega\in\Omega$.

    \item \emph{Stability under pasting:} If $\nu: \Omega \to \fPO$ is an $\cF_\theta$-measurable kernel and $\nu(\omega)\in \cP(\tau,\bomega\otimes_s\omega)$ for $P$-a.e.\ $\omega\in\Omega$,
    then the measure defined by
    \begin{equation}\label{eq:defPbar}
      \bar{P}(A)=\iint (\1_A)^{\theta,\omega}(\omega') \,\nu(d\omega';\omega)\,P(d\omega),\quad A\in \cF
    \end{equation}
    is an element of $\cP(s,\bomega)$.
  \end{enumerate}
\end{assumption}

\begin{remark}\label{rk:conseqOfAssumpt}
  \begin{enumerate}[topsep=3pt, partopsep=0pt, itemsep=1pt,parsep=2pt]
    \item[(a)] As $\cP$ is nonempty, Assumption~(ii) implies that the set
$\{\omega\in\Omega:\, \cP(\tau,\omega)=\varnothing\}$ is $P$-null for any
$P\in\cP$ and stopping time $\tau$.
    \item[(b)] At an intuitive level, Assumptions~(ii) and (iii)
suggest the identity
$\cP(\tau,\omega)=\{P^{\tau,\omega}:\, P\in\cP\}$. This expression is not well-defined because $P^{\tau,\omega}$ is defined only up to a $P$-nullset; nevertheless, it sheds some light on the relations between the sets of measures that we have postulated.
  \end{enumerate}
\end{remark}

The following is the main result of this section. We denote by $\esssup^P$
the essential supremum under $P\in\fPO$ and use the convention
$\sup\varnothing = -\infty$.

\begin{theorem}\label{th:dpp}
   Let Assumption~\ref{as:invarianceAndPasting} hold true, let $\sigma\leq\tau$ be stopping times and let $\xi:\Omega\to\overline{\R}$ be an upper semianalytic function. Then the function
   \[
     \cE_\tau(\xi)(\omega):=\sup_{P\in\cP(\tau,\omega)} E^P[\xi^{\tau,\omega}],\quad\omega\in\Omega
   \]
   is $\cF_\tau^*$-measurable and upper semianalytic. Moreover,
   \begin{equation}\label{eq:DPP}
     \cE_\sigma(\xi)(\omega) = \cE_\sigma(\cE_\tau(\xi))(\omega)\quad\mbox{for all}\quad \omega\in\Omega.
   \end{equation}
   Furthermore,
   \begin{equation}\label{eq:esssupRep}
     \cE_\tau(\xi) = \mathop{\esssup^P}_{P'\in \cP(\tau;P)} E^{P'}[\xi|\cF_\tau]\quad P\as\qfaq P\in\cP,
   \end{equation}
   where $\cP(\tau;P)=\{P'\in \cP:\, P'=P \mbox{ on } \cF_\tau\}$, and in particular
   \begin{equation}\label{eq:esssupDPP}
     \cE_\sigma(\xi) = \mathop{\esssup^P}_{P'\in \cP(\sigma;P)} E^{P'}[\cE_\tau(\xi)|\cF_\sigma]\quad P\as\qfaq P\in\cP.
   \end{equation}
\end{theorem}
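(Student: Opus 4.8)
The plan is to prove the four claims in a natural order, building up from measurability to the dynamic programming identity and finally the essential supremum representations. Let me sketch each.

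**Measurability and upper semianalyticity of $\cE_\tau(\xi)$.**

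First I would establish that $\cE_\tau(\xi)$ is upper semianalytic. The key tool is the classical measurable selection / projection theory for analytic sets. Define the function
$$(P', \omega) \mapsto E^{P'}[\xi^{\tau,\omega}]$$
on $\fPO \times \Omega$. The strategy is to show this is upper semianalytic jointly in $(P', \omega)$. This requires:
- $\xi^{\tau,\omega}$ is jointly upper semianalytic in $(\omega, \tilde\omega)$ — this should follow from $\xi$ being upper semianalytic plus the fact that the concatenation map $(\omega, \tilde\omega) \mapsto \omega \otimes_\tau \tilde\omega$ is Borel.
- Integration preserves upper semianalyticity: by a standard result (Bertsekas–Shreve Prop 7.48), if $g(P', \tilde\omega)$... actually the map $(P', \omega) \mapsto E^{P'}[\xi^{\tau,\omega}]$ involves integrating the upper semianalytic function $\xi^{\tau,\omega}(\tilde\omega)$ against $P'(d\tilde\omega)$, and the fundamental theorem here (B–S Prop 7.48) states that $P' \mapsto \int f \, dP'$ is upper semianalytic when $f$ is.

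Then
$$\cE_\tau(\xi)(\omega) = \sup_{P' \in \cP(\tau,\omega)} E^{P'}[\xi^{\tau,\omega}]$$
is the projection onto $\Omega$ of the upper semianalytic function on the analytic graph $\{(P',\omega) : P' \in \cP(\tau,\omega)\}$ (Assumption (i)). By the projection theorem for upper semianalytic functions (B–S Prop 7.47 / 7.39), the sup over the analytic-graph fibers is again upper semianalytic. This handles the upper semianalyticity, hence universal measurability.

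For $\cF_\tau^*$-measurability specifically, I'd use Galmarino's test: I need $\cE_\tau(\xi) = \cE_\tau(\xi) \circ \iota_\tau$. This holds because $\xi^{\tau,\omega}$ depends only on $\omega$ up to time $\tau(\omega)$ (stated in the excerpt), and $\cP(\tau,\omega) = \cP(\tau,\iota_\tau(\omega))$ by adaptedness. Combined with universal measurability (upper semianalytic $\subseteq$ universally measurable) and the $\iota_\tau$-invariance, the characterization in the excerpt gives $\cF_\tau^*$-measurability.

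**The dynamic programming identity \eqref{eq:DPP}.**

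This is the technical core and the main obstacle. The goal is
$$\cE_\sigma(\xi)(\omega) = \cE_\sigma(\cE_\tau(\xi))(\omega) \quad \text{for all } \omega.$$

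My plan: fix $\omega$, set $s := \sigma(\omega)$, $\bar\omega := \omega$, and let $\theta := \tau^{s,\bar\omega} - s$ as in the Assumption. I would prove two inequalities.

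*($\geq$):* Take any $P \in \cP(\sigma,\omega) = \cP(s,\bar\omega)$. By disintegration, write $P$ via its conditional kernel $P^\omega_\theta$ given $\cF_\theta$ (equivalently, along $\tau$ after shifting by $\sigma$). The Invariance assumption (ii) gives that the shifted conditionals $P^{\theta,\omega'}$ lie in $\cP(\tau, \bar\omega \otimes_s \omega')$ for $P$-a.e. $\omega'$. Then, by the tower property for the linear expectation $E^P$ and the definition of $\cE_\tau$,
$$E^P[\xi^{\sigma,\omega}] = E^P\big[E^{P^{\theta,\cdot}}[\xi^{\tau, \bar\omega \otimes_s \cdot}]\big] \leq E^P\big[\cE_\tau(\xi)^{\sigma,\omega}\big] \leq \cE_\sigma(\cE_\tau(\xi))(\omega).$$
Taking sup over $P$ gives $\cE_\sigma(\xi)(\omega) \leq \cE_\sigma(\cE_\tau(\xi))(\omega)$. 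I must be careful with the sign conventions for $\pm\infty$-valued integrals introduced in the excerpt, and verify the inner expectation is measurable enough (universally measurable kernel) to apply the tower property — this is where the universal measurability of $\cE_\tau(\xi)$ proved above is used.

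*($\leq$):* This is the harder direction and relies on Stability under pasting (iii). Given $\eps > 0$, for each $\omega'$ I want to select $\nu(\omega') \in \cP(\tau, \bar\omega \otimes_s \omega')$ that nearly attains the supremum defining $\cE_\tau(\xi)(\bar\omega \otimes_s \omega')$. The obstacle is that this selection must be done \emph{measurably} — I need an $\cF_\theta$-measurable kernel $\nu$. This is exactly where the analyticity of the graph (Assumption (i)) and a measurable selection theorem (the Jankov–von Neumann theorem, B–S Prop 7.50) are indispensable: upper semianalyticity of the integrand on an analytic graph yields a universally measurable $\eps$-optimal selector. Once $\nu$ is obtained, pasting (iii) produces $\bar P \in \cP(s,\bar\omega) = \cP(\sigma,\omega)$ with
$$E^{\bar P}[\xi^{\sigma,\omega}] \geq E^P\big[\cE_\tau(\xi)^{\sigma,\omega}\big] - \eps$$
for the chosen outer $P$. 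Taking sup over $P$ and letting $\eps \to 0$ gives $\cE_\sigma(\cE_\tau(\xi))(\omega) \leq \cE_\sigma(\xi)(\omega)$.

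I anticipate the delicate points to be: (1) the measurable selection for the $\eps$-optimizer, requiring care that the selector is $\cF_\theta$-measurable (not merely $\cF$-measurable) so that pasting applies — Galmarino's test and the fact that $\theta$ is a stopping time (verified in the excerpt) should reconcile this; (2) handling the $\overline\R$-valued integration conventions consistently, particularly the asymmetric $-\infty$ convention, throughout the tower-property manipulations.

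**The essential supremum representations \eqref{eq:esssupRep} and \eqref{eq:esssupDPP}.**

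Finally I would derive \eqref{eq:esssupRep} by comparing the pointwise sup definition of $\cE_\tau(\xi)$ with the $P$-essential sup over $\cP(\tau; P)$. Fix $P \in \cP$.

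*($\geq$):* For any $P' \in \cP(\tau; P)$, since $P' = P$ on $\cF_\tau$, the regular conditional distributions agree, and the Invariance assumption gives $(P')^{\tau,\omega} \in \cP(\tau,\omega)$ for $P$-a.e. $\omega$. Hence $E^{P'}[\xi|\cF_\tau](\omega) = E^{(P')^{\tau,\omega}}[\xi^{\tau,\omega}] \leq \cE_\tau(\xi)(\omega)$ for $P$-a.e. $\omega$, giving the essential sup is $\leq \cE_\tau(\xi)$, $P$-a.s.

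*($\leq$):* For the reverse, I use the pasting construction once more: given $\eps > 0$, select an $\cF_\tau$-measurable $\eps$-optimal kernel $\omega \mapsto P^\omega \in \cP(\tau,\omega)$ (again via Jankov–von Neumann), and paste it with $P$ to produce $P' \in \cP(\tau; P)$ — here $\sigma = \tau$ so $\theta = 0$ and pasting keeps $P'$ equal to $P$ on $\cF_\tau$. Then $E^{P'}[\xi|\cF_\tau] \geq \cE_\tau(\xi) - \eps$ $P$-a.s., so the essential sup dominates $\cE_\tau(\xi) - \eps$; letting $\eps \to 0$ closes the gap. The representation \eqref{eq:esssupDPP} then follows by combining \eqref{eq:esssupRep} (applied at $\sigma$) with the dynamic programming identity \eqref{eq:DPP}, rewriting $\cE_\sigma(\xi) = \cE_\sigma(\cE_\tau(\xi))$ and applying \eqref{eq:esssupRep} to the upper semianalytic function $\cE_\tau(\xi)$ in place of $\xi$.
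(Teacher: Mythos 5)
Your proposal follows essentially the same route as the paper's proof: joint upper semianalyticity of $(P,\omega)\mapsto E^P[\xi^{\tau,\omega}]$ via a Borel kernel and the projection theorem, the universally measurable Galmarino test (Lemma~\ref{lem:univgalmarino}) for $\cF_\tau^*$-measurability, the two inequalities for \eqref{eq:DPP} proved by invariance/disintegration in one direction and Jankov--von Neumann selection plus pasting in the other, and the same two-sided argument for \eqref{eq:esssupRep} with \eqref{eq:esssupDPP} obtained by combining \eqref{eq:DPP} and \eqref{eq:esssupRep}. Two local slips, neither affecting the architecture: in the first half of \eqref{eq:esssupRep} the regular conditional distributions of $P$ and $P'$ need \emph{not} agree (the paper instead transfers the $P'$-a.s.\ inequality to a $P$-a.s.\ one using that both sides are $\cF_\tau^*$-measurable and that $P=P'$ on $\cF_\tau$ extends to $\cF_\tau^*$), and in the second half the pasting must be performed at $s=0$ so that $\theta=\tau$ --- your parenthetical ``$\sigma=\tau$ so $\theta=0$'' would make the pasted measure equal to the (constant) kernel and discard $P$ entirely, rather than produce an element of $\cP(\tau;P)$.
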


\begin{remark}
  \begin{enumerate}[topsep=3pt, partopsep=0pt, itemsep=1pt,parsep=2pt]
    \item It is immediate from our definitions that $\cE_\tau(\xi)$ coincides (at every $\omega$) with the process $\cE(\xi):\,(t,\omega)\mapsto \cE_t(\xi)(\omega)$ sampled at the stopping time $\tau$. That is, the (often difficult) problem of aggregating the family $\{\cE_\tau(\xi)\}_\tau$ into a process is actually trivial---the reason is that the definitions are made without exceptional sets. Thus, the semigroup property~\eqref{eq:DPP} amounts to an optional sampling theorem for the nonlinear martingale $\cE(\xi)$.

    \item If Assumption~\ref{as:invarianceAndPasting} holds for deterministic times instead of stopping times, then so does the theorem. This will be clear from the proof.

    \item Let $\xi$ be upper semianalytic and let $\xi'$ be another function such that $\xi=\xi'$ $P$-a.s.\ for all $P\in\cP$. Then
    $\cE_\tau(\xi) = \esssup^P_{P'\in \cP(\tau;P)} E^{P'}[\xi'|\cF_\tau]$  $P$-a.s.\ for all $P\in\cP$ by~\eqref{eq:esssupRep}. In particular, if $\xi'$ is upper semianalytic, we have $\cE_\tau(\xi)=\cE_\tau(\xi')$ $P$-a.s.\ for all $P\in\cP$.

    \item The basic properties of the sublinear expectation are evident
from the definition. In particular,
$\cE_\tau(\1_A\xi)(\omega)=\1_A(\omega)\cE_\tau(\xi)(\omega)$ if
$A\in\cF_\tau$ and $\cP(\tau,\omega)\neq\varnothing$. (The latter
restriction
could be omitted with the convention $0 (-\infty)=-\infty$, but this seems somewhat daring.)
  \end{enumerate}
\end{remark}
\pagebreak

\begin{proof}[Proof of Theorem~\ref{th:dpp}]
  For brevity, we set $V_\tau:=\cE_\tau(\xi)$.

  \medskip
  \noindent\emph{Step~1.}
  We start by establishing the measurability of $V_\tau$. To this end, let
  $\fX=\fPO\times \Omega$ and consider the mapping $K: \fX\to \fPO$ defined by
  \[
    K(A;P,\omega) = E^P[(\1_A)^{\tau,\omega}],\quad A\in \cF.
  \]
  Let us show that $K$ is a Borel kernel; i.e.,
  \[
    K: \fX\to \fPO\quad\mbox{is Borel-measurable.}
  \]
  This is equivalent to saying that $(P,\omega)\mapsto E^P[f^{\tau,\omega}]$ is Borel-measurable
  whenever $f:\Omega\to \R$ is bounded and Borel-measurable (cf.\ \cite[Prop.\,7.26, p.\,134]{BertsekasShreve.78}).
  To see this, consider more generally the set $W$ of all bounded Borel functions $g: \Omega\times\Omega\to\R$ such that
  \begin{equation}\label{eq:KborelAux}
    (P,\omega)\mapsto E^P[g(\omega,\cdot)] \quad \mbox{is Borel-measurable.}
  \end{equation}
  Then $W$ is a linear space and if $g_n\in W$ increase to a bounded function $g$, then~\eqref{eq:KborelAux} is satisfied as $(P,\omega)\mapsto E^P[g(\omega,\cdot)]$ is the pointwise limit of the Borel-measurable  functions $(P,\omega)\mapsto E^P[g_n(\omega,\cdot)]$. Moreover, $W$ contains any bounded, uniformly continuous function $g$. Indeed,
  if $\rho$ is a modulus of continuity for $g$ and $(P^n,\omega^n)\to(P,\omega)$ in $\fX$, then
  \begin{align*}
    \big|E^{P^n}[g(\omega_n,\cdot)&]-E^P[g(\omega,\cdot)] \big| \\
    &\leq \big|E^{P^n}[g(\omega_n,\cdot)]-E^{P^n}[g(\omega,\cdot)] \big| + \big|E^{P^n}[g(\omega,\cdot)]-E^P[g(\omega,\cdot)] \big| \\
    &\leq \rho(\dist(\omega^n,\omega)) + \big|E^{P^n}[g(\omega,\cdot)]-E^P[g(\omega,\cdot)] \big| \to 0,
  \end{align*}
  showing that $(P,\omega)\mapsto E^P[g(\omega,\cdot)]$ is continuous and thus Borel-measurable. Since the uniformly continuous functions generate the Borel $\sigma$-field on $\Omega\times\Omega$, the monotone class theorem implies that $W$ contains all bounded Borel-measurable functions and in particular the function $(\omega,\omega')\mapsto f^{\tau,\omega}(\omega')$. Therefore, $K$ is a Borel kernel.

  It is a general fact that Borel kernels integrate upper semianalytic functions into upper semianalytic ones (cf.\ \cite[Prop.\,7.48, p.\,180]{BertsekasShreve.78}). In particular,
  as $\xi$ is upper semianalytic, the function
  \[
    (P,\omega)\mapsto E^P[\xi^{\tau,\omega}]\equiv\int \xi(\omega') K(d\omega';P,\omega)
  \]
  is upper semianalytic.
In conjunction with Assumption~\ref{as:invarianceAndPasting}(i), which states that $\cP(\tau,\omega)$ is the $\omega$-section of an analytic subset of $\fPO\times\Omega$, a variant of the projection theorem (cf.\ \cite[Prop.\,7.47, p.\,179]{BertsekasShreve.78}) allows us to conclude that
  \[
    \omega\mapsto V_\tau(\omega)=\sup_{P\in\cP(\tau,\omega)} E^P[\xi^{\tau,\omega}]
  \]
  is again upper semianalytic as a function on $\Omega$.
  It remains to show that $V_\tau$ is measurable with respect to the universal completion $\cF_\tau^*$.
As $\omega\mapsto V_\tau(\omega)$ depends only on
$\omega$ up to time $\tau(\omega)$, this follows directly from the
following universally measurable extension of Galmarino's test.

\begin{lemma}
\label{lem:univgalmarino}
  Let $X:\Omega\to\overline{\R}$ be $\cF^*$-measurable and let $\tau$
  be a stopping time.  Then $X$ is $\cF_\tau^*$-measurable if and only if
  $X(\omega)=X(\iota_\tau(\omega))$ for all $\omega\in\Omega$, where
  $\iota_\tau:\Omega\to\Omega$ is the stopping map
  $(\iota_\tau(\omega))_t=\omega_{t\wedge\tau(\omega)}$.
\end{lemma}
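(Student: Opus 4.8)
The plan is to reduce the statement to its indicator form and then invoke the classical Galmarino test recalled above, which in indicator form reads: a Borel set $B$ satisfies $B\in\cF_\tau$ if and only if $B=\iota_\tau^{-1}(B)$. Since an $\overline{\R}$-valued $X$ is $\cF_\tau^*$-measurable precisely when every level set $\{X>c\}$, $c\in\Q$, lies in $\cF_\tau^*$, and since $X=X\circ\iota_\tau$ holds everywhere precisely when every such level set is invariant, i.e. $\{X>c\}=\iota_\tau^{-1}\{X>c\}$, it suffices to prove the following set version: for $A\in\cF^*$ one has $A\in\cF_\tau^*$ if and only if $A=\iota_\tau^{-1}(A)$. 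This reduction is legitimate in both directions because $\cF_\tau\subseteq\cF$ forces $\cF_\tau^*\subseteq\cF^*$, so the level sets of an $\cF_\tau^*$-measurable function are automatically in $\cF^*$. Throughout I would work from the definition $\cF_\tau^*=\cap_P(\cF_\tau)^P$: membership $A\in\cF_\tau^*$ means that for every probability $P$ on $(\Omega,\cF_\tau)$ there exist $B_1,B_2\in\cF_\tau$ with $B_1\subseteq A\subseteq B_2$ and $P(B_2\setminus B_1)=0$.

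For the ``if'' direction, assume $A\in\cF^*$ and $A=\iota_\tau^{-1}(A)$, and fix a probability $P$ on $\cF_\tau$. The idea is to transport $P$ to $\cF$ via the stopping map: since $\iota_\tau^{-1}(\cF)=\cF_\tau$, the pushforward $\mu(\cdot):=P(\iota_\tau^{-1}(\cdot))$ is a probability on $(\Omega,\cF)$. Because $A\in\cF^*$, there are $A_1,A_2\in\cF$ with $A_1\subseteq A\subseteq A_2$ and $\mu(A_2\setminus A_1)=0$. Setting $B_i:=\iota_\tau^{-1}(A_i)\in\cF_\tau$, the invariance $A=\iota_\tau^{-1}(A)$ gives $B_1\subseteq A\subseteq B_2$, while $P(B_2\setminus B_1)=\mu(A_2\setminus A_1)=0$; hence $A\in(\cF_\tau)^P$, and as $P$ was arbitrary, $A\in\cF_\tau^*$.

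For the ``only if'' direction, assume $A\in\cF_\tau^*$; here I must upgrade the almost-sure information encoded in the universal completion to the everywhere-statement $A=\iota_\tau^{-1}(A)$. The device is to test against two-point measures: fixing $\omega\in\Omega$ and writing $y:=\iota_\tau(\omega)$, apply the definition with $P:=\tfrac12(\delta_\omega+\delta_y)$ on $\cF_\tau$ to obtain $B_1\subseteq A\subseteq B_2$ in $\cF_\tau$ with $P(B_2\setminus B_1)=0$. The null condition forces both $\omega$ and $y$ to lie outside $B_2\setminus B_1$, while the classical Galmarino identity $B_i=\iota_\tau^{-1}(B_i)$ yields the equivalence $\omega\in B_i\Leftrightarrow y\in B_i$ for $i=1,2$. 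A short chase through the sandwich $B_1\subseteq A\subseteq B_2$ then gives $\omega\in A\Leftrightarrow y\in A$: if $\omega\in A$ then $\omega\in B_2$, so $y\in B_2$, and since $\omega\notin B_2\setminus B_1$ also $\omega\in B_1$, whence $y\in B_1\subseteq A$; the reverse implication and the symmetric case are identical. As $\omega$ was arbitrary, $A=\iota_\tau^{-1}(A)$.

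I expect the ``only if'' direction to be the main obstacle, since it requires passing from the inherently almost-everywhere nature of the universal completion to a conclusion that must hold at every single $\omega$. The resolution lies entirely in the choice of test measures: the two-point measures $\tfrac12(\delta_\omega+\delta_{\iota_\tau(\omega)})$ localize the completion exactly at the pair $(\omega,\iota_\tau(\omega))$, turning a null-set statement into the required pointwise equivalence; dually, the pushforward $P\circ\iota_\tau^{-1}$ is what lets the ``if'' direction borrow approximating sets from $\cF^*$ and pull them back into $\cF_\tau$. Once the set version is established, the level-set reduction of the first paragraph delivers the lemma for general $\overline{\R}$-valued $X$.
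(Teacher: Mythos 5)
Your proof is correct and takes essentially the same route as the paper's: your ``if'' direction is an inlined, by-hand proof (via the pushforward $P\circ\iota_\tau^{-1}$) of the fact the paper simply cites, namely that $\iota_\tau$ remains measurable from $(\Omega,\cF_\tau^*)$ to $(\Omega,\cF^*)$, and your ``only if'' direction rests on exactly the paper's key device, the two-point test measure $\tfrac12(\delta_\omega+\delta_{\iota_\tau(\omega)})$ combined with the classical Galmarino test. The only cosmetic difference is that you reduce to level sets and argue with sandwiching sets, while the paper argues directly with functions and their $\cF_\tau$-measurable modifications.
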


\begin{proof}
  By Galmarino's test, the stopping map $\iota_\tau$ is measurable
  from $(\Omega,\cF_\tau)$ to $(\Omega,\mathcal{F})$. As a consequence, $\iota_\tau$ is also measurable from
  $(\Omega,\cF_\tau^*)$ to $(\Omega,\cF^*)$; cf.\ \cite[Lem.\,8.4.6,
  p.\,282]{Cohn.80}. Hence, if
  $X=X\circ\iota_\tau$, then $X$ is $\cF_\tau^*$-measurable.

  To see the converse, recall that if $Y$ is $\cF^*_\tau$ measurable and $P\in\fPO$, there exists an $\cF_\tau$ measurable $Y'$ such that $Y'=Y$ $P$-a.s. Suppose that there exists $\omega\in\Omega$ such that
  $X(\omega)\ne X(\iota_\tau(\omega))$. Let $P$ be the probability measure
  that puts mass $1/2$ on $\omega$ and $\iota_\tau(\omega)$, and let
  $X'$ be any random variable such that $X'=X$ $P$-a.s.  Then clearly
  $X'(\omega)\ne X'(\iota_\tau(\omega))$, so that $X'$ is not
  $\cF_\tau$-measurable by Galmarino's test.  It follows that $X$ is not $\cF_\tau^*$-measurable.
\end{proof}

We now collect some basic facts about composition of upper semianalytic
random variables that will be used in the sequel without further comment.

\begin{lemma}\label{lem:usacompose}
  Let $\xi:\Omega\to\overline{\R}$ be upper semianalytic,
  let $\tau$ be a stopping time, and let
  $\nu:\Omega\to\fPO$ be a Borel-measurable kernel.  Then
  \begin{enumerate}[topsep=3pt, partopsep=0pt, itemsep=1pt,parsep=2pt]
    \item $\xi^{\tau,\omega}$ is upper semianalytic for every $\omega\in\Omega$;
    \item $\omega\mapsto E^{\nu(\omega)}[\xi^{\tau,\omega}]$ is upper semianalytic.
  \end{enumerate}
\end{lemma}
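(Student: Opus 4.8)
The plan is to derive both parts from a single \emph{composition principle}: if $f:Y\to\overline{\R}$ is upper semianalytic on a Polish space $Y$ and $\phi:X\to Y$ is Borel-measurable between Polish spaces, then $f\circ\phi$ is upper semianalytic. I would justify this by writing $\{f\circ\phi>c\}=\phi^{-1}(\{f>c\})$; since $\{f>c\}$ is analytic, say $\{f>c\}=\psi(Z)$ for a Polish space $Z$ and Borel $\psi:Z\to Y$, one has $\phi^{-1}(\{f>c\})=\proj_X\{(x,z):\phi(x)=\psi(z)\}$. The set in braces is the preimage of the (Borel) diagonal of $Y\times Y$ under the Borel map $(\phi,\psi)$, hence Borel, and its projection onto $X$ is analytic. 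Thus $\{f\circ\phi>c\}$ is analytic for every $c\in\R$, i.e.\ the closure of the analytic sets under Borel preimages is what drives the argument. This is standard and can be referenced against the stability properties in \cite[Ch.\,7]{BertsekasShreve.78}.

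For part (i), I would fix $\omega$ and write $\xi^{\tau,\omega}=\xi\circ\Phi_\omega$, where $\Phi_\omega(\tomega):=\omega\otimes_\tau\tomega$. With $t_0:=\tau(\omega)$ and $x_0:=\omega_{t_0}$ now \emph{fixed} constants, the path $\Phi_\omega(\tomega)$ agrees with $\omega$ on $[0,t_0)$ and equals $x_0+\tomega_{\cdot-t_0}$ on $[t_0,\infty)$. If $\tomega^n\to\tomega$ locally uniformly, then $\Phi_\omega(\tomega^n)\to\Phi_\omega(\tomega)$ locally uniformly, since on any $[0,T]$ the initial segment is unchanged and the tail is a fixed shift of $\tomega^n$. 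Hence $\Phi_\omega$ is continuous, in particular Borel, and the composition principle yields that $\xi^{\tau,\omega}$ is upper semianalytic.

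For part (ii), I would invoke Step~1, which already shows that the joint map $(P,\omega)\mapsto E^P[\xi^{\tau,\omega}]$ on $\fPO\times\Omega$ is upper semianalytic whenever $\xi$ is. Since $\nu$ is a Borel kernel, the map $\omega\mapsto(\nu(\omega),\omega)$ from $\Omega$ into the Polish space $\fPO\times\Omega$ is Borel-measurable, and $\omega\mapsto E^{\nu(\omega)}[\xi^{\tau,\omega}]$ is exactly the composition of the Step~1 map with it. A second application of the composition principle gives the claim.

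The genuine content of this lemma is light, since the real work---showing that $K$ is a Borel kernel and that Borel kernels send upper semianalytic integrands to upper semianalytic functions---was done in Step~1. The one point I would take care to pin down is the composition principle itself: the closure of the analytic $\sigma$-field under Borel preimages is precisely what makes both reductions legitimate, and is the only step that is standard but not entirely formal.
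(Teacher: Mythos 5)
Your proposal is correct and takes essentially the same route as the paper: part (i) is the composition $\xi\circ\Phi_\omega$ with $\Phi_\omega(\tomega)=\omega\otimes_\tau\tomega$ Borel (indeed continuous), and part (ii) composes the Step~1 map $(P,\omega)\mapsto E^P[\xi^{\tau,\omega}]$ with the Borel map $\omega\mapsto(\nu(\omega),\omega)$. The only difference is that you prove the underlying composition principle (stability of upper semianalytic functions under Borel preimages) from scratch via the diagonal/projection argument, whereas the paper simply cites \cite[Lem.\,7.30, p.\,178]{BertsekasShreve.78}; your derivation of that principle is sound.
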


\begin{proof}
  If $X$ is upper semianalytic and $\iota$ is Borel-measurable,
  then $X\circ\iota$ is upper semianalytic
  \cite[Lem.\,7.30, p.\,178]{BertsekasShreve.78}.
  The first statement now follows immediately as
  $\xi^{\tau,\omega}=\xi\circ\iota$ with $\iota(\omega')=\omega\otimes_\tau
  \omega'$.  For the second statement, note that we have shown above
  that $(P,\omega)\mapsto E^P[\xi^{\tau,\omega}]$ is upper semianalytic,
  while $\omega\mapsto (\nu(\omega),\omega)$ is Borel-measurable by
  assumption.
\end{proof}

We also recall for future reference that the composition of
two universally measurable functions is again universally measurable
\cite[Prop.\,7.44, p.\,172]{BertsekasShreve.78}.

  \medskip
  \noindent\emph{Step~2.} We turn to the proof of~\eqref{eq:DPP}, which we can cast as
  \begin{equation}\label{eq:DPPres}
    \sup_{P\in\cP(\sigma,\bomega)} E^P\big[ \xi^{\sigma,\bomega} \big] = \sup_{P\in\cP(\sigma,\bomega)} E^P\big[ V_\tau^{\sigma,\bomega} \big]\quad \mbox{for all}\quad  \bomega\in\Omega,
  \end{equation}
  where $V_\tau^{\sigma,\bomega}:=(V_\tau)^{\sigma,\bomega}$.
  In the following, we fix $\bomega\in \Omega$, and for brevity, we set
  \[
      s:=\sigma(\bomega)\quad\mbox{and}\quad\theta:=(\tau-\sigma)^{\sigma,\bomega}\equiv\tau(\bomega\otimes_s\cdot)-s.
  \]

First, let us prove the inequality ``$\leq$'' in~\eqref{eq:DPPres}.
  Fix $P\in \cP(\sigma,\bomega)\equiv\cP(s,\bomega)$.
Assumption~\ref{as:invarianceAndPasting}(ii) shows that $P^{\theta,\omega}\in \cP(\tau,\bomega\otimes_s \omega)$ for $P$-a.e.\ $\omega\in \Omega$ and hence
  \begin{align*}
    E^{P^{\theta,\omega}}\big[(\xi^{s,\bomega})^{\theta,\omega}\big]
    & = E^{P^{\theta,\omega}}\big[\xi^{\theta(\omega)+s,\bomega \otimes_s\omega}\big]\\
    & = E^{P^{\theta,\omega}}\big[\xi^{\tau,\bomega \otimes_s\omega}\big]\\
    & \leq \sup_{P'\in\cP(\tau,\bomega\otimes_s \omega)} E^{P'}\big[\xi^{\tau,\bomega \otimes_s\omega}\big]\\
    & = V_\tau^{s,\bomega}(\omega)\quad\mbox{for $P$-a.e.\ $\omega\in \Omega$.}
  \end{align*}
  Taking $P(d\omega)$-expectations on both sides, we obtain that
  \[
    E^P\big[\xi^{s,\bomega}\big]
    \leq E^P\big[V_\tau^{s,\bomega}\big].
  \]
  The inequality ``$\leq$'' in~\eqref{eq:DPPres} follows by taking the supremum over $P\in\cP(s,\bomega)$.

  We now show the converse inequality ``$\geq$'' in~\eqref{eq:DPPres}.
Fix $\varepsilon>0$.
We begin by noting that since the sets $\cP(\tau,\omega)$
are the $\omega$-sections of an analytic set in $\fPO\times\Omega$, the
Jankov-von Neumann theorem in the form of \cite[Prop.\,7.50,
p.\,184]{BertsekasShreve.78} yields a universally
measurable function $\tilde\nu:\Omega\to\fPO$ such that
  \[
    E^{\tilde{\nu}(\omega)} [\xi^{\tau,\omega}] \geq
    \begin{cases}
      V_\tau(\omega)-\eps & \text{if } V_\tau(\omega)<\infty\\
      \eps^{-1} & \text{if }V_\tau(\omega)=\infty
    \end{cases}
  \]
and $\tilde\nu(\omega)\in\cP(\tau,\omega)$ for all $\omega\in\Omega$
such that $\cP(\tau,\omega)\ne\varnothing$.

Fix $P\in\cP(s,\bomega)$.  As the composition of universally
measurable functions is universally measurable, the map
$\omega\mapsto\tilde\nu(\bomega\otimes_s\iota_\theta(\omega))$ is
$\cF_\theta^*$-measurable by Lemma \ref{lem:univgalmarino}.
Therefore, there exists an $\cF_\theta$-measurable kernel
$\nu:\Omega\to\fPO$ such that
$\nu(\omega)=\tilde\nu(\bomega\otimes_s\iota_\theta(\omega))$ for
$P$-a.e.\ $\omega\in\Omega$.  Moreover,
Assumption~\ref{as:invarianceAndPasting}(ii) shows that
$\cP(\tau,\bomega \otimes_s\omega)$ contains the element
$P^{\theta,\omega}$ for $P$-a.e.\ $\omega\in\Omega$, so that
$\{\omega\in\Omega:\cP(\tau,\bomega \otimes_s\omega)\ne\varnothing\}$
has full $P$-measure.  Thus
  \begin{equation}\label{eq:epsOptimal}
    \nu(\cdot)\in \cP(\tau,\bomega\otimes_s\cdot)\quad\!\mbox{and}\quad\!\! E^{\nu(\cdot)} [\xi^{\tau,\bomega\otimes_s\cdot}] \geq     \begin{cases}
      V_\tau^{s,\bomega}-\eps & \!\text{on } \{V_\tau^{s,\bomega}<\infty\}\\
      \eps^{-1} & \!\text{on }\{V_\tau^{s,\bomega}=\infty\}
    \end{cases}
    \;\,P\as
  \end{equation}
  Let $\bar{P}$ be the measure defined by
  \begin{equation}\label{eq:defBarPinProof}
    \bar{P}(A)=\iint (\1_A)^{\theta,\omega}(\omega') \,\nu(d\omega';\omega)\,P(d\omega),\quad A\in \cF;
  \end{equation}
  then $\bar{P}\in \cP(s,\bomega)$ by Assumption~\ref{as:invarianceAndPasting}(iii). In view of~\eqref{eq:epsOptimal}, we
  conclude that
  \begin{align*}
    E^P\big[ V_\tau^{s,\bomega} \wedge \eps^{-1} \big]
      & \leq E^P\big[ E^{\nu(\cdot)} [\xi^{\tau,\bomega\otimes_s\cdot}]\big] + \eps \\
      & = E^P\big[ E^{\nu(\cdot)} [(\xi^{s,\bomega})^{\theta,\cdot}]\big] + \eps \\
      & = E^{\bar{P}}[\xi^{s,\bomega}] + \eps \\
      &\leq \sup_{P'\in\cP(s,\bomega)} E^{P'}\big[ \xi^{s,\bomega} \big] + \eps.
  \end{align*}
  As $\eps>0$ and $P\in\cP(s,\bomega)$ were arbitrary, this completes the proof of~\eqref{eq:DPPres}.

\medskip

Before continuing with the proof, we record a direct consequence of
disintegration of measures for ease of reference.  Its proof is omitted.

\begin{lemma}\label{le:rcpdOfBarP}
  In the setting of Assumption~\ref{as:invarianceAndPasting}(iii), we have
  \[
  \bar{P}^{\theta,\omega}=\nu(\omega) \quad \mbox{for}\quad \bar{P}\mbox{-a.e.} \mbox{ and } P\mbox{-a.e.}\; \omega\in \Omega.
  \]
\end{lemma}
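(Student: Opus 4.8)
The plan is to recognize the right-hand side of~\eqref{eq:defPbar} as an explicit disintegration of $\bar P$ over $\cF_\theta$ and thereby identify the kernel $\nu$ with $\bar P^{\theta,\omega}$. Recall that $\bar P^{\theta,\omega}$ is built from a regular conditional probability distribution $\{\bar P^\omega_\theta\}$ of $\bar P$ given $\cF_\theta$ via $\bar P^{\theta,\omega}(A)=\bar P^\omega_\theta(\omega\otimes_\theta A)$, and that any two such r.c.p.d.'s agree $\bar P$-a.e.; hence it suffices to exhibit one convenient version of $\{\bar P^\omega_\theta\}$ and read off the conclusion. First I would record the elementary but crucial fact that $\bar P=P$ on $\cF_\theta$: if $A\in\cF_\theta$ then $\1_A=\1_A\circ\iota_\theta$, and since $\iota_\theta(\omega\otimes_\theta\omega')=\iota_\theta(\omega)$ we get $(\1_A)^{\theta,\omega}(\omega')=\1_A(\omega)$, so plugging this into~\eqref{eq:defPbar} and integrating out $\nu$ yields $\bar P(A)=P(A)$.

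Next I would propose the candidate kernel $Q^\omega(\,\cdot\,):=\int(\1_{\,\cdot}\,)^{\theta,\omega}(\omega')\,\nu(d\omega';\omega)$, i.e.\ the law obtained by pasting increments drawn from $\nu(\omega)$ onto $\omega|_{[0,\theta(\omega)]}$, and verify that it is a version of $\{\bar P^\omega_\theta\}$. Three points must be checked. (i) $Q^\omega$ is concentrated on paths coinciding with $\omega$ on $[0,\theta(\omega)]$, which is immediate from the form of the concatenation. (ii) $\omega\mapsto Q^\omega(B)$ is $\cF_\theta$-measurable: it is $\cF$-measurable because $\nu$ is a Borel kernel and Borel kernels integrate bounded Borel functions to Borel functions (the monotone-class argument of Step~1), and it factors through $\iota_\theta$ because both $\nu(\omega)$ and $\omega\otimes_\theta\omega'$ depend on $\omega$ only through $\iota_\theta(\omega)$, so Galmarino's test upgrades measurability to $\cF_\theta$-measurability. (iii) The averaging identity $\int_A Q^\omega(B)\,\bar P(d\omega)=\bar P(A\cap B)$ for all $A\in\cF_\theta$, $B\in\cF$: here I would use $\bar P=P$ on $\cF_\theta$ to replace $\bar P(d\omega)$ by $P(d\omega)$ on the left (legitimate as the integrand is $\cF_\theta$-measurable), and on the right expand $\bar P(A\cap B)$ via~\eqref{eq:defPbar}, again using $(\1_A)^{\theta,\omega}(\omega')=\1_A(\omega)$ to pull $\1_A(\omega)$ out of the inner integral; the two sides then coincide by Fubini.

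Having established $\bar P^\omega_\theta=Q^\omega$ for $\bar P$-a.e.\ $\omega$, I would compute
\[
  \bar P^{\theta,\omega}(A)=Q^\omega(\omega\otimes_\theta A)=\int \1_{\omega\otimes_\theta A}(\omega\otimes_\theta\omega')\,\nu(d\omega';\omega)=\nu(\omega)(A),
\]
where the last equality uses that $\omega'\mapsto\omega\otimes_\theta\omega'$ is injective (one recovers $\omega'$ from the increments of the concatenated path after time $\theta(\omega)$), so that $\1_{\omega\otimes_\theta A}(\omega\otimes_\theta\omega')=\1_A(\omega')$. This gives the claim $\bar P$-a.e. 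To obtain it also $P$-a.e., I would observe that the coincidence set $\{\omega:Q^\omega=\bar P^\omega_\theta\}$ is $\cF_\theta$-measurable: writing equality of the two $\fPO$-valued kernels as a countable intersection over a convergence-determining family of bounded continuous test functions makes it an intersection of $\cF_\theta$-measurable sets. This set has full $\bar P$-measure, and since $\bar P=P$ on $\cF_\theta$ its complement is $P$-null as well.

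I expect the main obstacle to be point (iii): verifying carefully that $Q$ is genuinely a regular conditional distribution of $\bar P$ given $\cF_\theta$ rather than merely a plausible-looking disintegration, and keeping straight the interplay between the $\cF_\theta$-measurability of $Q$ and the identity $\bar P=P$ on $\cF_\theta$. The latter is precisely the mechanism that lets the final conclusion be transferred from a $\bar P$-a.e.\ statement to a $P$-a.e.\ one, so it is worth isolating at the outset.
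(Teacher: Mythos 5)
Your proof is correct and matches the paper's intended argument: the paper states this lemma as ``a direct consequence of disintegration of measures'' and omits the proof, and your construction of $Q^\omega$ as the push-forward of $\nu(\omega)$ under the pasting map, the verification that it is a regular conditional probability distribution of $\bar{P}$ given $\cF_\theta$, and the identification $\bar{P}^{\theta,\omega}=\nu(\omega)$ supply exactly the omitted details. The point you isolate at the outset---that $\bar{P}=P$ on $\cF_\theta$ and that the exceptional set can be taken in $\cF_\theta$, so the $\bar{P}$-null set is also $P$-null---is indeed the mechanism that justifies the ``$\bar{P}$-a.e.\ and $P$-a.e.'' formulation in the statement.
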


We return to the proof of the theorem.

  \medskip
  \noindent\emph{Step~3.}
  Fix $P\in\cP$; we show the representation~\eqref{eq:esssupRep}. Let $P'\in\cP(\tau;P)$; then ${P'}^{\tau,\omega}\in \cP(\tau,\omega)$ $P'$-a.s.\ by
  Assumption~\ref{as:invarianceAndPasting}(ii) and hence
  \[
    V_\tau=\sup_{P''\in\cP(\tau,\omega)} E^{P''}[\xi^{\tau,\omega}] \geq E^{{P'}^{\tau,\omega}}[\xi^{\tau,\omega}]=E^{P'}[\xi|\cF_\tau](\omega)\quad\mbox{for}\quad {P'}\mbox{-a.e.}\,\omega\in\Omega.
  \]
  Both sides of this inequality are $\cF_\tau^*$-measurable. Moreover, we have $P=P'$ on $\cF_\tau$, and since measures extend uniquely to the universal completion, we also have $P=P'$ on $\cF_\tau^*$. Therefore, the inequality holds also $P$-a.s.
  Since $P'\in \cP(\tau;P)$ was arbitrary, we conclude that
  \[
    V_\tau \geq \mathop{\esssup^P}_{P'\in \cP(\tau;P)} E^{P'}[\xi|\cF_\tau]\quad P\as
  \]
  It remains to show the converse inequality. Let $\eps>0$ and consider the construction in Step~2 for the special case $s=0$ (in which there is no dependence on $\bomega$). Then the measure $\bar{P}$ from~\eqref{eq:defBarPinProof} is in $\cP$ by Assumption~\ref{as:invarianceAndPasting}(iii) and it coincides with $P$ on $\cF_\tau$; that is, $\bar{P}\in\cP(\tau;P)$. Using Lemma~\ref{le:rcpdOfBarP} and~\eqref{eq:epsOptimal}, we obtain that
  \[
    E^{\bar{P}}[\xi|\cF_\tau](\omega)=E^{\bar{P}^{\tau,\omega}}[\xi^{\tau,\omega}]=E^{\nu(\omega)}[\xi^{\tau,\omega}]\geq (V_\tau(\omega)-\eps)\wedge\eps^{-1}
  \]
  for $P$-a.e.\ $\omega\in\Omega$. Since $\eps>0$ was arbitrary, it follows that
  \[
    \mathop{\esssup^P}_{P'\in \cP(\tau;P)} E^{P'}[\xi|\cF_\tau] \geq V_\tau\quad P\as,
  \]
  which completes the proof of~\eqref{eq:esssupRep}.

  \medskip
  \noindent\emph{Step~4.}
  It remains to note that~\eqref{eq:DPP} and~\eqref{eq:esssupRep} applied to $V_\tau$ yield that
  \[
    \cE_\sigma(\xi)=\cE_\sigma(V_\tau)=\mathop{\esssup^P}_{P'\in \cP(\sigma;P)} E^{P'}[V_\tau|\cF_\sigma]\quad P\as\qfaq P\in\cP,
  \]
  which is~\eqref{eq:esssupDPP}. This completes the proof of Theorem~\ref{th:dpp}.
\end{proof}

\section{Application to $G$-Expectations}\label{se:Gexp}

We consider the set of local martingale measures
\[
  \fM=\big\{P\in\fPO:\, B \mbox{ is a local $P$-martingale}\big\}
\]
and its subset
\[
  \fMa=\big\{P\in\fM:\, \br{B}^P \mbox{ is absolutely continuous $P$-a.s.}\big\},
\]
where $\br{B}^P$ is the $\R^{d\times d}$-valued quadratic variation process of $B$ under $P$ and absolute continuity refers to the Lebesgue measure.
We fix a nonempty, convex and compact set
$\bD\subseteq \R^{d\times d}$ of matrices and consider the set
\[
  \cP_\bD=\big\{P\in\fMa:\, d\br{B}^P_t/dt \in \bD\; P\times dt\mbox{-a.e.}\big\}.
\]
We remark that defining $d\br{B}^P_t/dt$ up to nullsets, as required in the above formula, causes no difficulty because $\br{B}^P$ is \emph{a priori} absolutely continuous under $P$. A detailed discussion is given around~\eqref{eq:hata}, when we need a measurable version of this derivative. Moreover, we note that $\cP_\bD$ consists of true martingale measures because $\bD$ is bounded---the definition of $\fM$ is made in anticipation of the subsequent section.

It is well known that the sublinear expectation
\[
  \cE_0^\bD(\xi):=\sup_{P\in \cP_\bD} E^P[\xi]
\]
yields the $G$-expectation on the space $\mathbb{L}^1_G$ of quasi-continuous functions if $G:\R^{d\times d}\to\R$ is given by
\[
  G(\Gamma)=\frac{1}{2}\sup_{A\in\bD} \tr (\Gamma A).
\]
Indeed, this follows from~\cite{DenisHuPeng.2010} with an additional density argument (see, e.g., \cite[Remark~3.6]{DolinskyNutzSoner.11}).
The main result of this section states our main assumptions are satisfied for the sets $\cP(s,\bomega):=\cP_\bD$; to wit, in this special case, there is no dependence on $s$ or $\bomega$. The result  entails that we can extend the conditional $G$-expectation to upper semianalytic functions and
to stopping times. (The extension is, of course, not unique; cf.\ Section~\ref{se:counterex}.)

\begin{proposition}\label{pr:assumptionSatisfiedForGexp}
  The set $\cP_\bD$ satisfies Assumption~\ref{as:invarianceAndPasting}.
\end{proposition}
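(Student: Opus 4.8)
The plan is to reduce membership in $\cP_\bD$ to a characterization expressed entirely through the (super)martingale property, so that all three parts of Assumption~\ref{as:invarianceAndPasting} follow from the classical stability of (super)martingales under conditioning and pasting. Since $\bD$ is convex and compact, a symmetric matrix $a$ lies in $\bD$ if and only if $\tr(\Gamma a)\le \sup_{A\in\bD}\tr(\Gamma A)=2G(\Gamma)$ for every symmetric $\Gamma$. Using that $B_t^\top\Gamma B_t-\tr(\Gamma\br{B}_t)$ is a local martingale under any $P\in\fM$ (Itô's formula), I would prove the characterization: $P\in\cP_\bD$ if and only if $B$ is a $P$-local martingale and, for every $\Gamma$ in a fixed countable dense set $D$ of symmetric matrices, the process $N^\Gamma_t:=B_t^\top\Gamma B_t-2G(\Gamma)\,t$ is a $P$-local supermartingale. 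The forward direction is Itô; for the converse, the supermartingale property forces $t\mapsto\tr(\Gamma\br{B}_t)-2G(\Gamma)t$ to be nonincreasing for each $\Gamma\in D$ (compare the finite-variation part in the canonical decomposition), hence $\br{B}_t-\br{B}_s\in(t-s)\bD$ for all $s\le t$. Boundedness of $\bD$ then yields Lipschitz continuity of $\br{B}$, so $\br{B}$ is absolutely continuous with density in $\bD$, i.e.\ $P\in\cP_\bD$. A pleasant by-product is that absolute continuity (membership in $\fMa$) need not be checked separately---it is automatic.

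For the measurability condition (i), note that because $\cP(\tau,\omega)=\cP_\bD$ carries no $\omega$-dependence, the graph is simply $\cP_\bD\times\Omega$, so it suffices to show that $\cP_\bD$ is a Borel (hence analytic) subset of $\fPO$. By the characterization, $\cP_\bD$ is the intersection of the set of $P$ under which $B$ is a local martingale with the countably many sets of $P$ under which $N^\Gamma$ ($\Gamma\in D$) is a local supermartingale. Each of these is Borel: localizing by the hitting times of large balls reduces to bounded processes, and the (super)martingale property is then the countable family of conditions $E^P[(X_t-X_s)\,h]=0$ (resp.\ $\le 0$) over rational $s\le t$ and $h$ in a countable determining class of bounded continuous $\cF_s$-measurable functions, each a Borel condition on $P$ by the monotone-class argument already used in Step~1 of the proof of Theorem~\ref{th:dpp}. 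A countable intersection of Borel sets is Borel.

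Conditions (ii) and (iii) then follow from the stability of the (super)martingale property under, respectively, conditioning at a stopping time and pasting. For invariance, if $P\in\cP_\bD$ then passing to the regular conditional distribution at $\theta$ and shifting to the increments preserves both the local martingale property of $B$ and the supermartingale property of each $N^\Gamma$ for $P$-a.e.\ $\omega$; this is the classical conditioning result for martingale problems \cite{StroockVaradhan.79}, and by the characterization it gives $P^{\theta,\omega}\in\cP_\bD$. For pasting, Lemma~\ref{le:rcpdOfBarP} identifies $\bar{P}^{\theta,\omega}=\nu(\omega)\in\cP_\bD$, so $\bar{P}$ coincides with $P\in\cP_\bD$ up to $\theta$ and disintegrates into members of $\cP_\bD$ afterwards; the local martingale property of $B$ and the supermartingale property of each $N^\Gamma$ under $\bar{P}$ follow by checking increments before and after $\theta$ separately and joining them through the conditional expectation given $\cF_\theta$, which is the classical concatenation result for martingale problems. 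Hence $\bar{P}\in\cP_\bD$.

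The main obstacle is establishing the supermartingale characterization cleanly and ensuring that conditioning and pasting preserve the precise density constraint $a\in\bD$ rather than some relaxation, together with the attendant $P\times dt$-null set bookkeeping. Phrasing the constraint through the processes $N^\Gamma$ is what makes this manageable: it converts an a.e.-in-time statement about the unobservable density into a genuine supermartingale property, which is exactly the kind of object for which stability under conditioning and pasting is classical. Once the characterization is in hand, the three verifications are essentially immediate.
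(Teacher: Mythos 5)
Your strategy is sound in outline and genuinely different from the paper's. The paper obtains Proposition~\ref{pr:assumptionSatisfiedForGexp} as a special case of Theorem~\ref{th:assumptionSatisfiedForRandomGexp}: it constructs a universal progressively measurable version of $\br{B}$ via Bichteler's pathwise stochastic integration and a measurable volatility $\hat{a}$ as a $\limsup$ of difference quotients, and then verifies (i)--(iii) of Assumption~\ref{as:invarianceAndPasting} by direct manipulation of regular conditional probability distributions (Lemmas~\ref{le:locMartMeasBorel}--\ref{le:restrictedLocMartMeasStable}); only for condition (i) does it record a shortcut exploiting convexity and compactness (Lemma~\ref{le:GexpMeasClosed}: weak closedness of $\cP_\bD$ via separating hyperplanes). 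You instead encode the volatility constraint in countably many local supermartingale conditions on the processes $N^\Gamma$, so that (i) reduces to Borel-measurability of countably many moment conditions, and (ii), (iii) reduce to the classical stability of martingale problems under conditioning and pasting \cite{StroockVaradhan.79}. What your route buys is the elimination of the pathwise-integration machinery and of the process $\hat{a}$ altogether, with absolute continuity of $\br{B}$ coming for free; what it gives up is generality: it relies essentially on $\bD$ being deterministic, convex and compact, and hence cannot be adapted to Section~\ref{se:randomG}, where $\bD_t(\omega)$ is random, merely Borel-measurable, and need not be convex or closed---which is precisely why the paper routes the proof through the heavier machinery.

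Two steps need repair before your argument is complete. First, the opening separation claim is false at the paper's level of generality: $\bD$ is an arbitrary convex compact subset of $\R^{d\times d}$, and testing a symmetric matrix $a$ only against symmetric $\Gamma$ characterizes membership in the symmetrization $\{(A+A^\top)/2:A\in\bD\}$ rather than in $\bD$ (take $\bD=\{A\}$ with $A$ non-symmetric). Since $\br{B}$ is symmetric, the cure is the device of Lemma~\ref{le:GexpMeasClosed}: test against \emph{all} $\Gamma\in\R^{d\times d}$, i.e.\ use $N^\Gamma_t=B_t^\top\operatorname{sym}(\Gamma)B_t-C^\Gamma t$ with $C^\Gamma=\sup_{A\in\bD}\tr(\Gamma^\top A)$, where $\operatorname{sym}(\Gamma)=(\Gamma+\Gamma^\top)/2$. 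Second, and more substantively, $N^\Gamma$ is not invariant under the shift that defines $P^{\theta,\omega}$: on the concatenated path one has $B_{u+\theta}=B_\theta+\tomega_u$, hence $N^\Gamma_{u+\theta}-N^\Gamma_\theta=\tomega_u^\top\Gamma\tomega_u-2G(\Gamma)u+2B_\theta^\top\operatorname{sym}(\Gamma)\tomega_u$. The conditioning theorem delivers the supermartingale property of the shifted increment on the left, whereas your characterization requires it for the ``fresh'' process $\tomega\mapsto\tomega_u^\top\Gamma\tomega_u-2G(\Gamma)u$; you must subtract the cross term $2B_\theta(\omega)^\top\operatorname{sym}(\Gamma)\tomega_u$, which is legitimate because it is a local martingale under $P^{\theta,\omega}$ once the conditioning theorem has been applied to $B$ itself (linear functions produce no cross term, so this is not circular). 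The identical correction is needed in the pasting step (iii), where the paper's Lemma~\ref{le:shiftedVolatilityRandomG} plays the corresponding role. A minor third point: for the supermartingale \emph{inequalities}, the countable determining class must be an algebra (so that the monotone class theorem for sets applies), not merely an intersection-stable generator as suffices for the martingale equalities in Lemma~\ref{le:locMartMeasBorel}. All three repairs are routine, so I would call the proposal correct in substance but incomplete as written.
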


This proposition is a special case of Theorem~\ref{th:assumptionSatisfiedForRandomGexp} below.
Nevertheless,  as the corresponding proof in the next section is significantly more involved, we state separately a simple argument for Assumption~\ref{as:invarianceAndPasting}(i). It depends not only on $\bD$ being deterministic, but also on its convexity and compactness.

\begin{lemma}\label{le:GexpMeasClosed}
  The set $\cP_\bD\subseteq \fPO$ is closed for the topology of weak convergence.
\end{lemma}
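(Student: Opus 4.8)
The goal is to show that $\cP_\bD$ is closed in $\fPO$ under weak convergence. My plan is to take a sequence $P^n \in \cP_\bD$ converging weakly to some $P \in \fPO$ and verify that $P$ inherits both defining properties: that $B$ is a $P$-local martingale (in fact a martingale, since $\bD$ is bounded) and that the density $d\br{B}^P_t/dt$ takes values in $\bD$ almost everywhere. I expect that the cleanest route is to characterize membership in $\cP_\bD$ through a family of processes that are martingales precisely when $P \in \cP_\bD$, and then to pass to the limit using the stability of the martingale property under weak convergence.

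Concretely, the first step is to recall that for a fixed matrix $A$ in the interior geometry of $\bD$, the constraint $d\br{B}^P_t/dt \in \bD$ can be encoded via the support function $G(\Gamma)=\tfrac12\sup_{A\in\bD}\tr(\Gamma A)$: since $\bD$ is convex and compact, $a \in \bD$ if and only if $\tr(\Gamma a) \le 2G(\Gamma)$ for every symmetric $\Gamma$, i.e.\ $a\in\bD$ iff the linear functional $\Gamma\mapsto\tr(\Gamma a)$ is dominated by $2G$. This is where convexity and compactness enter, exactly as the remark before the lemma anticipates. The natural processes to test are, for each smooth $f$ with appropriate growth, the Itô-type processes built from $f(B_t)$ and the compensator involving $G$; the point is to exhibit, for $P\in\cP_\bD$, a convex cone of supermartingales $M^f$ (indexed by test data) such that $P\in\cP_\bD$ iff each $M^f$ is a $P$-supermartingale, and such that the supermartingale property is a closed condition under weak convergence.

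The key technical step is the passage to the limit. For bounded continuous test functionals, the supermartingale inequality $E^P[M^f_t \mid \cF_s] \le M^f_s$ can be written as $E^P[(M^f_t - M^f_s)\,h] \le 0$ for every bounded continuous $\cF_s$-measurable $h \ge 0$, and provided $M^f_t$ is expressed through bounded continuous functionals of the path this expectation is stable under weak convergence $P^n \rightharpoonup P$. The first moment control needed to upgrade local martingales to genuine martingales and to justify the limit comes from the boundedness of $\bD$, which yields uniform bounds on $E^{P^n}[\br{B}_t]$ and hence uniform integrability; the fact that each $P^n\in\fMa$ with $d\br{B}^{P^n}/dt\in\bD$ gives $\br{B}^{P^n}_t\le Ct$ for a constant $C$ depending only on $\bD$. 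The main obstacle, and the place requiring the most care, is that $\br{B}^P$ is not a continuous functional of $\omega$ in the weak topology, so one cannot test the quadratic variation directly; instead one must work with the martingale characterization, choosing test processes that are genuinely continuous and bounded on the path space so that weak convergence applies, and then recover the constraint on $d\br{B}^P/dt$ from the limiting supermartingale relations together with the separating property of the functionals $\Gamma\mapsto\tr(\Gamma a)-2G(\Gamma)$.

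Once the limit $P$ is shown to make $B$ a local martingale and to satisfy the supermartingale relations encoding $d\br{B}^P_t/dt\in\bD$ for $P\times dt$-a.e.\ $(t,\omega)$, it follows that $\br{B}^P$ is absolutely continuous with derivative in the compact set $\bD$, so $P\in\fMa$ and $P\in\cP_\bD$. This establishes closedness. I would remark that, combined with the boundedness of $\bD$ (which confines all $P^n$ to a tight family, since $E^{P^n}[\,\sup_{u\le t}|B_u|^2\,]$ is uniformly bounded), this closedness in fact gives compactness of $\cP_\bD$, which is what the measurability assumption (i) ultimately needs: a closed (indeed compact) set has, in particular, an analytic graph when viewed as the constant section over $\Omega$.
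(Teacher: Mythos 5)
Your proposal takes essentially the same approach as the paper's proof: fix a weakly convergent sequence in $\cP_\bD$, characterize membership in the convex compact set $\bD$ by the separating hyperplane theorem (equivalently, domination by the support function $G$), use the boundedness of $\bD$ to get uniform moment bounds for the limit passage, and---since $\br{B}$ is not a weakly continuous functional of the path---test instead with continuous quadratic functionals of the increments, recovering the volatility constraint from the limiting relations. Indeed, your supermartingale inequalities $E^P[(M^\Gamma_t-M^\Gamma_s)h]\le 0$ tested against nonnegative continuous $\cF_s$-measurable $h$ are exactly the paper's inequalities $E^P\big[\ell\big((B_t-B_s)(B_t-B_s)'\big)f\big]\le C^\ell(t-s)E^P[f]$ with $C^\ell=\sup_{A\in\bD}\ell(A)=2G$ evaluated at the corresponding matrix (the cross terms vanish under martingale laws), and the final step you assert---deducing $d\br{B}_t\ll dt$ and $d\br{B}_t/dt\in\bD$ $P\times dt$-a.e.---is precisely what the paper completes by extending these inequalities to adapted integrands via approximation.
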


\begin{proof}
  Let $(P_n)$ be a sequence in $\cP_\bD$ converging weakly to $P\in \fPO$; we need to show that $P\in\fMa$ and that $d\br{B}_t/dt \in \bD$ holds $P\times dt$-a.e. To this end, it suffices to consider a fixed, finite time interval $[0,T]$.

  As $\bD$ is bounded, the Burkholder-Davis-Gundy inequalities yield that there is a constant $C_T$ such that
  \begin{equation}\label{eq:BDG}
    E^{P'} \bigg[\sup_{t\leq T} |B_t|^{4}\bigg] \leq C_T
  \end{equation}
  for all $P'\in\cP$. If $0\leq s\leq t\leq T$ and $f$ is any $\cF_s$-measurable bounded continuous function, it follows that
  \[
    E^P[(B^{(i)}_t-B^{(i)}_s)f]=\lim_n E^{P_n}[(B^{(i)}_t-B^{(i)}_s)f]=0
  \]
  for each component $B^{(i)}$ of $B$; that is, $B$ is a martingale under $P$.

  To see that $d\br{B}_t\ll dt$ $P$-a.s.\ and $d\br{B}_t/dt \in \bD$ $P\times dt\mbox{-a.e.}$, we use an argument similar to a proof in \cite{DolinskyNutzSoner.11}. Given $\Gamma\in\R^{d\times d}$, the separating hyperplane theorem implies that
  \begin{equation}\label{eq:separatingHyperplane}
   \Gamma\in \bD \q\mbox{if and only if}\q \ell(\Gamma)\leq C^\ell:=\sup_{A\in \bD} \ell(A)\quad \mbox{for all}\quad \ell \in (\R^{d\times d})^*,
  \end{equation}
  where $(\R^{d\times d})^*$ is the set of all linear functionals $\ell:\R^{d\times d}\to \R$.
  Now let $\ell \in (\R^{d\times d})^*$, fix $0\leq s<t\leq T$ and set $\Delta_{s,t}B:=B_t-B_s$.
  Let $f\geq0$ be an $\cF_s$-measurable bounded continuous function.
  For each $n$, $B$ is a square-integrable $P_n$-martingale and hence
  \begin{equation}\label{eq:sqIntMart}
    E^{P_n} [(\Delta_{s,t}B)(\Delta_{s,t}B)'|\cF_s]
    =E^{P_n} [B_tB_t' - B_sB_s'|\cF_s]=E^{P_n}[\br{B}_t-\br{B}_s |\cF_s].
  \end{equation}
  Using the convexity of $\bD$, we have $\br{B}_t-\br{B}_s\in (t-s)\bD$ $P_n$-a.s.\ and hence
  \[
    E^{P_n} \big[\ell\big((\Delta_{s,t}B)(\Delta_{s,t}B)'\big)f\big] \leq E^{P_n} \big[C^\ell(t-s)f\big]
  \]
  by~\eqref{eq:separatingHyperplane}. Recalling~\eqref{eq:BDG} and passing to the limit, the same holds with $P_n$ replaced by $P$. We use~\eqref{eq:sqIntMart} for $P$ to deduce that
  \begin{equation}\label{eq:proofClosednessSimple}
    E^P \big[\ell\big(\br{B}_t-\br{B}_s\big)f\big]
    \leq E^P \big[C^\ell (t-s)f\big].
  \end{equation}
  By approximation, this extends to functions $f$ that are $\cF_s$-measurable but not necessarily continuous. It follows that
  if $H\geq0$ is a bounded, measurable and adapted process, then
  \begin{equation}\label{eq:proofClosedness}
    E^P\bigg[\int_0^T H_t\,\ell(d\br{B}_t)\bigg] \leq E^P\bigg[\int_0^T H_t\,C^\ell\,dt\bigg].
  \end{equation}
  Indeed, if $H$ is a step function of the form $H=\sum\1_{(t_i,t_{i+1}]}f_{t_i}$, this is immediate from \eqref{eq:proofClosednessSimple}. By direct approximation, \eqref{eq:proofClosedness} then holds when $H$ has left-continuous paths. To obtain the claim when $H$ is general, let $A'$ be the increasing process obtained by adding the total variation processes of the components of $\br{B}$ and let $A_t=A'_t+t$. Then
  \[
   H^n_t=\frac{1}{A_t-A_{(t-1/n)\vee0}}\int_{(t-1/n)\vee0}^t H_u\,dA_u,\quad t>0
  \]
  defines a bounded nonnegative process with $P$-a.s.\ continuous paths and $H^n(\omega)\to H(\omega)$ in $L^1(dA(\omega))$ for $P$-a.e.\ $\omega\in\Omega$. Thus, we can apply~\eqref{eq:proofClosedness} to $H^n$ and pass to the limit as $n\to\infty$.

  Since $\ell\in (\R^{d\times d})^*$ was arbitrary, \eqref{eq:proofClosedness} implies that
  $d\br{B}_t\ll dt$ $P$-a.s. Moreover, it follows that $\ell(d\br{B}_t/dt) \leq C^\ell$ $P\times dt$-a.e.\ and thus $d\br{B}_t/dt\in\bD$ $P\times dt$-a.e.\ by~\eqref{eq:separatingHyperplane}.
\end{proof}

\section{Application to Random $G$-Expectations}\label{se:randomG}

In this section, we consider an extension of the $G$-expectation, first introduced in \cite{Nutz.10Gexp}, where the set $\bD$ of volatility matrices is allowed to be time-dependent and random. Recalling the formula $G(\Gamma)=\sup_{A\in\bD} \tr (\Gamma A)/2$, this corresponds to a ``random $G$''.
Among other improvements, we shall remove completely the uniform continuity assumption that had to be imposed on $\bD$ in \cite{Nutz.10Gexp}.

We consider a set-valued process $\bD: \Omega \times \R_+\to
2^{\R^{d\times d}}$; i.e., $\bD_t(\omega)$ is a set of matrices for each
$(t,\omega)\in \R_+\times \Omega$.  We assume throughout this
section that $\bD$ is progressively measurable in the sense of graph-measurability.

\begin{assumption}
\label{aspt:rsetmeas}
For every $t\in\R_+$,
\[
	\big\{(s,\omega,A)\in [0,t]\times \Omega\times\R^{d\times d}:
	A\in\bD_s(\omega)\big\}\in
	\cB([0,t])\otimes\cF_t\otimes\cB(\R^{d\times d}),
\]
where $\cB([0,t])$ and $\cB(\R^{d\times d})$ denote the Borel $\sigma$-fields of
$[0,t]$ and $\R^{d\times d}$.
\end{assumption}

In particular, $\bD_t(\omega)$ depends only on the
restriction of $\omega$ to $[0,t]$.  In contrast to the special
case considered in the previous section, $\bD_t(\omega)$ must only be a
Borel set: it need not be bounded, closed, or convex.

\begin{remark}
The notion of measurability needed here is very weak.
It easily implies that if $A$ is a progressively measurable
$\R^{d\times d}$-valued process, then the set
$\{(\omega,t):A_t(\omega)\in\bD_t(\omega)\}$ is a progressively measurable
subset of $\R_+\times\Omega$, which is the main property we need in
the sequel.

A different notion of measurability for \emph{closed} set-valued processes
is the requirement that for every closed set $K\subseteq \R^{d\times d}$,
the lower inverse image $\{(t,\omega):\, \bD_t(\omega)\cap
K\neq\varnothing\}$ is a (progressively) measurable subset of
$\R_+\times \Omega$.  This implies Assumption \ref{aspt:rsetmeas};
cf.\ \cite[Thm.\,1E]{Rockafellar.76}.  However, our setting is more
general as it does not require the sets $\bD_t(\omega)$ to be closed.
\end{remark}

Given $(s,\bomega)\in\R_+\times\Omega$, we define $\cP_\bD(s,\bomega)$ to be the collection of all $P\in\fMa$ such that
\[
  \frac{d\br{B}^P_u}{du}(\omega) \in \bD^{s,\bomega}_{u+s}(\omega):=\bD_{u+s}(\bomega\otimes_s\omega) \quad\mbox{for } du\times P\mbox{-a.e.}\; (u,\omega) \in \R_+\times \Omega.
\]
We set $\cP_\bD=\cP_\bD(0,\bomega)$ as this collection does not depend on $\bomega$. We can then define the sublinear expectation
\[
  \cE_0^\bD(\xi):=\sup_{P\in \cP_\bD} E^P[\xi].
\]
When $\bD$ is compact, convex, deterministic and constant in time, we
recover the setup of the previous section.  The main result of the present section is that our
key assumptions are satisfied for the sets $\cP_\bD(s,\bomega)$. We
recall that $\cP_\bD(\tau,\bomega):=\cP_\bD(\tau(\bomega),\bomega)$ when
$\tau$ is a stopping time.

\begin{theorem}\label{th:assumptionSatisfiedForRandomGexp}
  The sets $\cP_\bD(\tau,\bomega)$, where $\tau$ is a (finite) stopping time and $\bomega\in\Omega$, satisfy Assumption~\ref{as:invarianceAndPasting}.
\end{theorem}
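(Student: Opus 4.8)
The plan is to reduce all three conditions to a single pathwise object: a progressively measurable ($\cF$-adapted) process $\hat{a}$, built directly from the canonical paths, that serves simultaneously as a version of $d\br{B}^P_t/dt$ for every $P\in\fMa$. Concretely, I would take the pathwise quadratic variation $\br{B}$ (defined as a limit along a fixed sequence of dyadic partitions, so that $\br{B}=\br{B}^P$ $P$-a.s.\ for every $P\in\fM$) and set $\hat{a}_t=\limsup_{n}n(\br{B}_t-\br{B}_{(t-1/n)^+})$, which is $\cF$-progressive and equals the Radon--Nikodym density $P\times dt$-a.e.\ whenever $P\in\fMa$. With this in hand, membership $P\in\cP_\bD(\tau,\omega)$ is equivalent to $P\in\fMa$ together with $\hat{a}_u(\omega')\in\bD_{u+\tau(\omega)}(\omega\otimes_{\tau(\omega)}\omega')$ for $du\times P$-a.e.\ $(u,\omega')$, and I would verify each part of Assumption~\ref{as:invarianceAndPasting} against this reformulation.

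For the measurability~(i), I would first record that $\fMa$ is a Borel subset of $\fPO$: the local martingale property is a countable family of Borel constraints on $P$ (via the martingale property of finitely many stopped test martingales built pathwise from $B$), while absolute continuity of $\br{B}^P$ can be read off the pathwise $\br{B}$ and $\hat{a}$. The constraint-violation functional
\[
  \phi(P,\omega):=E^P\!\left[\int_0^\infty e^{-u}\,\1_{\{\hat{a}_u\,\notin\,\bD_{u+\tau(\omega)}(\omega\otimes_{\tau(\omega)}\cdot)\}}\,du\right]
\]
is the $P$-integral of a bounded Borel function of $(\omega,\cdot)$: indeed, the map $(\omega,u,\omega')\mapsto(u+\tau(\omega),\,\omega\otimes_{\tau(\omega)}\omega',\,\hat{a}_u(\omega'))$ is Borel (using measurability of $\tau$, of concatenation, and of $\hat{a}$), so the indicator is Borel by the graph-measurability of Assumption~\ref{aspt:rsetmeas}. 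Integrating out $u$ and then applying the monotone-class argument from Step~1 of the proof of Theorem~\ref{th:dpp} shows $\phi$ is Borel in $(P,\omega)$. Hence the graph $\{(P,\omega):P\in\cP_\bD(\tau,\omega)\}=(\fMa\times\Omega)\cap\{\phi=0\}$ is Borel, a fortiori analytic.

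For the invariance~(ii) and the stability~(iii), the combinatorial engine is the associativity of concatenation, $\bomega\otimes_s(\omega\otimes_{\theta(\omega)}\omega'')=(\bomega\otimes_s\omega)\otimes_{s+\theta(\omega)}\omega''$, together with $\tau(\bomega\otimes_s\omega)=\theta(\omega)+s$, which makes the $\bD$-constraints before and after the stopping time $\theta$ match up. For~(ii) I would use that the local martingale property and the absolute continuity of $\br{B}$ are preserved under the shift $P\mapsto P^{\theta,\omega}$ at the stopping time $\theta$, and that the pathwise additivity of $\br{B}$ over concatenations yields that $\hat{a}_u$ under $P^{\theta,\omega}$ equals $\hat{a}_{\theta(\omega)+u}$ along $\omega\otimes_\theta\omega''$; a Fubini argument then transports the $du\times P$-a.e.\ constraint defining $\cP_\bD(s,\bomega)$ to the $du\times P^{\theta,\omega}$-a.e.\ constraint defining $\cP_\bD(\tau,\bomega\otimes_s\omega)$, for $P$-a.e.\ $\omega$. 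For~(iii), I would first check $\bar P\in\fM$ by verifying the martingale property of $B$ across the pasting time $\theta$ (using $P\in\fM$, $\nu(\omega)\in\fM$, and that $\theta$ is a stopping time), and then split $\br{B}^{\bar P}$ into its pre-$\theta$ part (governed by $P$) and its post-$\theta$ part, which by Lemma~\ref{le:rcpdOfBarP} is governed by $\nu(\omega)$; the density constraint then holds before $\theta$ because $\bar P=P$ on $\cF_\theta$ and after $\theta$ because $\nu(\omega)\in\cP_\bD(\tau,\bomega\otimes_s\omega)$, and the concatenation identity shows these two constraints assemble to exactly the defining constraint of $\cP_\bD(s,\bomega)$.

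I expect the main obstacle to be the measurability~(i), and specifically the construction and verification of the universal density $\hat{a}$: one must ensure that a single $\cF$-progressive process simultaneously represents $d\br{B}^P/dt$ for all $P\in\fMa$, and that the resulting constraint set is genuinely Borel through the coupled dependence on $\tau(\omega)$, the concatenation $\omega\otimes_{\tau(\omega)}\cdot$, and the set-valued map $\bD$. The preservation of the (local) martingale property under both conditioning in~(ii) and pasting in~(iii) is the second delicate point, since it is here that the stopping-time structure of $\theta$, rather than a deterministic time, is essential.
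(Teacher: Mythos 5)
Your proposal is correct and follows essentially the same route as the paper: the same progressive process $\hat{a}$ built from the pathwise quadratic variation, the same reduction of measurability to Borel-measurability of $\fMa$ plus a kernel-integration argument (your exponentially weighted violation functional $\phi$ is just a Fubini-equivalent rewriting of the paper's condition $E^P[(\1_A)^{\tau,\bomega}]=1$), and the same treatment of invariance and pasting by preserving the (local) martingale property under conditioning/pasting at $\theta$ and splitting the density constraint before and after $\theta$ via Lemma~\ref{le:rcpdOfBarP}. The only caveat worth noting is that the additivity of $\br{B}$ over concatenations is not literally pathwise but holds $P$-a.s.\ under each martingale measure (as in Lemma~\ref{le:shiftedVolatilityRandomG}), which is all your argument actually uses.
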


We state the proof as a sequence of lemmata. We shall use several times the following observation:
Given $P\in\fPO$, we have
$P\in\fM$ if and only if for each $1\leq i\leq d$ and $n\geq1$, the $i$th component $B^{(i)}$ of $B$ stopped at $\tau_n$,
\begin{equation}\label{eq:Bstopped}
  Y^{(i,n)}=B^{(i)}_{\cdot\wedge \tau_n},\quad \tau_n=\inf\{u\geq 0:\,|B_u|\geq n\},
\end{equation}
is a martingale under $P$.

We start by recalling (cf.\ \cite{SonerTouziZhang.2010dual}) that
using integration by parts and the pathwise stochastic integration of
Bichteler~\cite[Theorem~7.14]{Bichteler.81}, we can define a progressively
measurable, $\overline{\R}^{\,d\times d}$-valued process $\br{B}$
  such that
  \[
    \br{B}=\br{B}^P\quad P\as\quad\mbox{for all}\quad P\in\fM.
  \]
In particular, $\br{B}$ is continuous and of finite variation $P$-a.s.\
for all $P\in\fM$.

\begin{lemma}\label{le:locMartMeasBorel}
  The set $\fMa\subseteq \fPO$ is Borel-measurable.
\end{lemma}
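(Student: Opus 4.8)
The plan is to write $\fMa=\fM\cap\cS$, where $\cS=\{P\in\fPO:\br{B}^P\ll dt\ P\text{-a.s.}\}$, and to show separately that $\fM$ is Borel and that the restriction of $\cS$ to $\fM$ is Borel. The two workhorses are the pointwise characterization of $\fM$ recalled above via the stopped processes $Y^{(i,n)}$, together with the elementary but crucial fact that $P\mapsto E^P[g]$ is Borel-measurable on $\fPO$ for every bounded Borel function $g$ on $\Omega$: it is continuous for bounded continuous $g$ by definition of the weak topology, and the class of $g$ for which it is Borel is closed under bounded monotone limits, so a monotone class argument extends it to all bounded Borel $g$ (this is the one-variable version of the kernel computation already carried out in Step~1 of the proof of Theorem~\ref{th:dpp}).

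For $\fM$ I would encode the local martingale property through countably many linear constraints on $P$. By the stated criterion, $P\in\fM$ iff each $Y^{(i,n)}$ is a $P$-martingale, and since $Y^{(i,n)}$ is bounded by $n$ and has continuous paths, this holds iff
\[
  E^P\big[(B^{(i)}_{t\wedge\tau_n}-B^{(i)}_{s\wedge\tau_n})\,\varphi\big]=0
\]
for all rational $s\le t$ and all $\varphi$ in a fixed countable family $\cD_s$ of bounded $\cF_s$-measurable functions generating $\cF_s$ (e.g.\ $\varphi=h(B_{r_1},\dots,B_{r_m})$ with rational $r_j\le s$ and $h$ ranging over a countable dense subset of $C_b$). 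The reduction to rational times and to the countable family $\cD_s$ uses path-continuity with dominated convergence and a monotone class argument. Each integrand is a bounded Borel function of $\omega$ (recall $\tau_n$ is a Borel stopping time), so by the workhorse fact each constraint cuts out a Borel subset of $\fPO$; hence $\fM$ is a countable intersection of Borel sets, and thus Borel.

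For the absolute-continuity part I would exploit the universally defined, progressively measurable process $\br{B}$ with $\br{B}=\br{B}^P$ $P$-a.s.\ for all $P\in\fM$, so that on $\fM$ the condition $\cS$ refers to one fixed measurable object. As the increments of $\br{B}$ are positive semidefinite, absolute continuity of the matrix process is equivalent to that of its trace $A:=\tr\br{B}$, a continuous increasing process for $P\in\fM$; so it suffices to decide when the path measure $dA_t(\omega)$ is absolutely continuous w.r.t.\ $dt$. Here I would invoke the Lebesgue differentiation theorem: setting
\[
  \underline a_t(\omega):=\liminf_{n\to\infty}\frac{A_{t+1/n}(\omega)-A_{(t-1/n)^+}(\omega)}{2/n},
\]
a jointly Borel-measurable process (we need only joint measurability, not adaptedness), one has that $\int_0^T\underline a_t(\omega)\,dt$ equals the mass on $[0,T]$ of the absolutely continuous part of $dA(\omega)$, which is at most $A_T(\omega)$ with equality iff $dA(\omega)\ll dt$ on $[0,T]$. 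Thus $\Phi_n(\omega):=A_n(\omega)-\int_0^n\underline a_t(\omega)\,dt\ge 0$ is Borel in $\omega$, and $dA(\omega)\ll dt$ on $\R_+$ iff $\Phi_n(\omega)=0$ for every integer $n$. Consequently, for $P\in\fM$ one has $P\in\cS$ iff $E^P[\min(\Phi_n,1)]=0$ for all $n$; since each $\min(\Phi_n,1)$ is bounded Borel, the workhorse fact shows $\{P:E^P[\min(\Phi_n,1)]=0\}$ is Borel. Intersecting over $n$ and with $\fM$ yields $\fMa=\fM\cap\bigcap_n\{P:E^P[\min(\Phi_n,1)]=0\}$, which is Borel.

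The routine ingredients are the martingale encoding and the measurability of $P\mapsto E^P[g]$; the delicate point---which I expect to be the main obstacle---is the absolute-continuity criterion, namely producing a single measurable pathwise object ($\underline a$, via a symmetric difference quotient and the Lebesgue differentiation theorem) whose integral detects the singular part of $d\br{B}$, while being careful that $\br{B}$ agrees with $\br{B}^P$ only $P$-a.s., so that the criterion is asserted only on $\fM$.
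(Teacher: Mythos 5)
Your proposal is correct and takes essentially the same route as the paper: $\fM$ is cut out by countably many martingale constraints on the stopped processes using the Borel-measurability of $P\mapsto E^P[f]$ for bounded Borel $f$, and the absolute-continuity condition is then encoded via a pathwise Borel difference-quotient derivative of the universal process $\br{B}$, with $\fMa$ identified as the set of $P\in\fM$ under which integrating that derivative reconstructs the process. The only differences are cosmetic: the paper works componentwise with dyadic forward quotients and the martingale convergence theorem, testing $\br{B}_t=\int_0^t\varphi_s\,ds$ at rational times, whereas you reduce to the trace and use symmetric quotients with the Lebesgue differentiation theorem, testing the vanishing of the nonnegative singular-mass deficit $\Phi_n$ (modulo the routine truncation needed so that the tested integrand is bounded Borel on all of $\Omega$).
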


\begin{proof}
  \emph{Step~1.}
  We first show that $\fM\subseteq \fPO$ is Borel-measurable.
  Let $Y^{(i,n)}$ be a component of the stopped canonical process as in~\eqref{eq:Bstopped},
  and let $(A^u_m)_{m\geq1}$ be an intersection-stable,
  countable generator of $\cF_u$ for $u\geq0$. Then
  \[
    \fM = \bigcap_{i,m,n,u,v} \big\{P\in\fPO:\, E^P[(Y^{(i,n)}_v-Y^{(i,n)}_u)\1_{A^u_m}]=0\big\},
  \]
  where the intersection is taken over all integers $1\leq i\leq d$ and $m,n\geq1$, as well as all rationals $0\leq u\leq v$. Since the evaluation $P\mapsto E^P[f]$ is Borel-measurable for any bounded Borel-measurable function $f$ (c.f.\ \cite[Prop.\,7.25, p.\,133]{BertsekasShreve.78}), this representation entails that
  $\fM$ is Borel-measurable.

  \medskip
  \noindent\emph{Step~2.}
We now show that $\fMa\subseteq \fPO$ is
Borel-measurable. In terms of the process $\br{B}$ defined above, we
have
  \[
    \fMa=\{P\in\fM:\, \br{B}\mbox{ is absolutely continuous }P\as\}.
  \]
We construct a measurable version of the absolutely continuous part of $\br{B}$ as follows. For $n,k\geq 0$, let $A_n^k=(k2^{-n},(k+1)2^{-n}]$.
If $\cA_n$ is the $\sigma$-field generated by
$(A_n^k)_{k\geq 0}$, then $\sigma(\cup_n \cA_n)$ is the Borel
$\sigma$-field $\cB(\R_+)$.  Let
  \[
    \varphi^n_t(\omega)=\sum_{k\geq 0} \1_{A^k_n}(t)
\frac{\br{B}_{(k+1)2^{-n}}(\omega)-\br{B}_{k2^{-n}}(\omega)}{2^{-n}},\quad
(t,\omega)\in \R_+\times\Omega,
  \]
and define (the limit being taken componentwise)
  \[
    \varphi_t(\omega):=\limsup_{n\to\infty} \varphi^n_t(\omega),\quad
	(t,\omega)\in \R_+\times\Omega.
  \]
As $\br{B}$ has finite variation $P$-a.s.\ for $P\in\fM$, it follows
from the martingale convergence theorem (see the remark following
\cite[Theorem\,V.58, p.\,52]{DellacherieMeyer.82}) that $\varphi$ is
$P$-a.s.\ the density of the absolutely continuous part of $\br{B}$ with
respect to the Lebesgue measure. That is, for $P$-a.e.\ $\omega\in\Omega$
and all $t\in\R_+$,
  \[
    \br{B}_t(\omega)=\psi_t(\omega)+\int_0^t \varphi_s(\omega)\,ds,
  \]
where $\psi(\omega)$ is singular with respect to the
Lebesgue measure.  We deduce that
  \[
	\fMa=\bigg\{P\in\fM:\;
	\br{B}_t = \int_0^t \varphi_s\,ds\;\;P\as
	\mbox{ for all }t\in\mathbb{Q}_+
	\bigg\}.
  \]
As $\br{B}$ and $\varphi$ are Borel-measurable by construction, it follows
that $\fMa$ is Borel-measurable (once more, we use \cite[Prop.\,7.25,
p.\,133]{BertsekasShreve.78}).
\end{proof}

In the sequel, we need a progressively measurable version of the
volatility of $B$; i.e., the time derivative of the quadratic variation.
To this end we define the $\overline{\R}^{\,d\times d}$-valued
process (the limit being taken componentwise)
\begin{equation}\label{eq:hata}
  \hat{a}_t(\omega):=\limsup_{n\to\infty} n\big[\br{B}_t(\omega)-\br{B}_{t-1/n}(\omega)\big],\quad t>0
\end{equation}
with $\hat{a}_0=0$. (We choose and fix some convention to subtract
infinities, say $\infty - \infty = -\infty$). Note that we are taking the limit along the
fixed sequence $1/n$, which ensures that $\hat{a}$ is again progressively measurable.
On the other hand, if $P\in\fMa$, then we know
\emph{a priori} that $\br{B}$ is $P$-a.s.\ absolutely continuous and
therefore $\hat{a}$ is $dt\times P$-a.s.\ finite and equal to the
derivative of $\br{B}$, and $\int \hat{a}_t\,dt= \br{B}$ $P$-a.s.
We will only consider $\hat{a}$ in this setting.

Given a stopping time $\tau$, we shall use the following notation associated with a path $\omega\in\Omega$ and a continuous process $X$, respectively:
\begin{equation}\label{eq:shiftedPath}
  \omega^\tau_\cdot:=\omega_{\cdot+\tau(\omega)}-\omega_{\tau(\omega)},\quad X^\tau_{\cdot}:=X_{\cdot+\tau} - X_\tau\;.
\end{equation}
Of course, $X^\tau$ is not to be confused with the ``stopped process'' that is sometimes denoted the same way.

\begin{lemma}\label{le:randomGmeasuresBorel}
  The graph $\{(P,\omega):\, \omega\in\Omega,\, P\in\cP_\bD(\tau,\omega)\}\subseteq \fPO\times \Omega$ is Borel-measurable for any stopping time $\tau$.
\end{lemma}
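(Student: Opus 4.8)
The plan is to realize the graph as the intersection of two Borel sets. Since $\fMa\subseteq\fPO$ is Borel by Lemma~\ref{le:locMartMeasBorel}, the set $\fMa\times\Omega$ is Borel, so it remains to carve out the volatility constraint. The key simplification is that on $\fMa$ the derivative $d\br{B}^P_u/du$ agrees $du\times P$-a.e.\ with the fixed progressively measurable process $\hat a$ from~\eqref{eq:hata}, which does not depend on $P$; thus the constraint will be expressible as the vanishing of a parametrized integral of a single jointly Borel function.

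First I would construct that indicator. Let $G=\{(t,\eta,A)\in\R_+\times\Omega\times\R^{d\times d}:A\in\bD_t(\eta)\}$, which lies in $\cB(\R_+)\otimes\cF\otimes\cB(\R^{d\times d})$ by Assumption~\ref{aspt:rsetmeas} (write $G=\bigcup_n G\cap([0,n]\times\Omega\times\R^{d\times d})$), hence is Borel in the product Polish space. Consider
\[
\Psi(\omega,u,\omega')=\big(u+\tau(\omega),\,\omega\otimes_{\tau(\omega)}\omega',\,\hat a_u(\omega')\big),\qquad \Omega\times\R_+\times\Omega\to\R_+\times\Omega\times\overline{\R}^{\,d\times d}.
\]
Each coordinate is Borel: $(\omega,u)\mapsto u+\tau(\omega)$ because $\tau$ is $\cF$-measurable; $(\omega,u,\omega')\mapsto\omega\otimes_{\tau(\omega)}\omega'$ because each coordinate evaluation of the concatenation is measurable in $(\omega,\omega')$ (using measurability of $\tau$, of $B_\tau$, and joint measurability of $(r,\omega')\mapsto\omega'_r$), and the Borel field of $\Omega$ is generated by the evaluations; and $(u,\omega')\mapsto\hat a_u(\omega')$ because $\hat a$ is progressively measurable. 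On the Borel set where $\hat a_u(\omega')$ has no infinite entry I would set $g(\omega,u,\omega')=\1_{G^c}(\Psi(\omega,u,\omega'))$, and $g=1$ elsewhere; then $g:\Omega\times\R_+\times\Omega\to\{0,1\}$ is jointly Borel and equals $1$ exactly when $\hat a_u(\omega')\notin\bD_{u+\tau(\omega)}(\omega\otimes_{\tau(\omega)}\omega')$.

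Next I would recast membership in integral form. For $P\in\fMa$ one has $\hat a_u=d\br{B}^P_u/du$ for $du\times P$-a.e.\ $(u,\omega')$, so with the probability measure $\mu(du)=e^{-u}\,du$ on $\R_+$,
\[
P\in\cP_\bD(\tau,\omega)\iff P\in\fMa\ \text{and}\ \Phi(P,\omega):=\int_0^\infty\!\!\int_\Omega e^{-u}\,g(\omega,u,\omega')\,P(d\omega')\,du=0,
\]
the nonnegative integrand integrating to zero precisely when $g=0$ holds $du\times P$-a.e. Measurability of $\Phi$ is the parametrized-kernel step, entirely analogous to Step~1 in the proof of Theorem~\ref{th:dpp}: the class of bounded Borel $h$ on $\Omega\times(\R_+\times\Omega)$ for which $(P,\omega)\mapsto\int h(\omega,\cdot)\,d(\mu\otimes P)$ is Borel is a vector space closed under bounded monotone limits, and it contains products $h=f_1(\omega)f_2(u,\omega')$ since $\int h(\omega,\cdot)\,d(\mu\otimes P)=f_1(\omega)\,E^P[\tilde f_2]$ with $\tilde f_2(\omega')=\int f_2(u,\omega')\,\mu(du)$ bounded Borel and $P\mapsto E^P[\tilde f_2]$ Borel; the monotone class theorem then covers all bounded Borel $h$, in particular $g$. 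Hence $\Phi$ is Borel, and the graph equals $(\fMa\times\Omega)\cap\{\Phi=0\}$, which is Borel.

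The genuinely delicate point—and the one I would be most careful about—is the joint Borel measurability of $g$, specifically that the concatenation map $(\omega,\omega')\mapsto\omega\otimes_{\tau(\omega)}\omega'$ is Borel when the concatenation time is the (typically discontinuous) value $\tau(\omega)$, and that composing it with the progressively measurable $\hat a$ and the graph $G$ stays within the Borel category (including the bookkeeping for infinite entries of $\hat a$). Once this is established, the measurability of $\Phi$ is routine, being a direct repetition of the kernel argument already carried out for Theorem~\ref{th:dpp}.
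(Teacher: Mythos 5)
Your proof is correct, and it rests on the same pillars as the paper's: restrict to $\fMa\times\Omega$ (Borel by Lemma~\ref{le:locMartMeasBorel}), replace $d\br{B}^P_u/du$ by the fixed progressively measurable process $\hat{a}$ from~\eqref{eq:hata}, invoke Assumption~\ref{aspt:rsetmeas} for the constraint set, and finish with a kernel-measurability argument so that the graph becomes $(\fMa\times\Omega)$ intersected with the level set of a Borel map of $(P,\omega)$. The difference lies in how the constraint is packaged. You keep the time variable and the concatenation explicit inside a three-variable Borel function $g(\omega,u,\omega')$ and the integral $\Phi(P,\omega)=\int_0^\infty e^{-u}E^P[g(\omega,u,\cdot)]\,du$, which forces you to (a) verify joint Borel measurability of $(\omega,\omega')\mapsto\omega\otimes_{\tau(\omega)}\omega'$ by hand and (b) re-run the monotone class argument on $\Omega\times(\R_+\times\Omega)$ with the auxiliary measure $e^{-u}\,du$. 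The paper compresses both steps: using the identity $(\bomega\otimes_\tau\omega)^\tau=\omega$ (shifting a concatenated path recovers the increment path, since $\tau(\bomega\otimes_\tau\omega)=\tau(\bomega)$ by Galmarino's test), it defines the single Borel set $A=\{\omega\in\Omega:\hat{a}_u(\omega^\tau)\in\bD_{u+\tau(\omega)}(\omega)\ du\mbox{-a.e.}\}$, Borel by Assumption~\ref{aspt:rsetmeas} and Fubini, observes that for $P\in\fMa$ one has $P\in\cP_\bD(\tau,\bomega)$ if and only if $E^P[(\1_A)^{\tau,\bomega}]=1$, and then obtains Borel measurability of $(P,\bomega)\mapsto E^P[(\1_A)^{\tau,\bomega}]$ by citing Step~1 of the proof of Theorem~\ref{th:dpp} verbatim with $f=\1_A$. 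So the two ``delicate points'' you single out are exactly the content of Step~1, where the concatenation measurability and the monotone class argument were already carried out once and for all; your version buys self-containedness (it does not presuppose that step), while the paper's buys brevity and a cleaner reduction of the $du\times P$-a.e.\ condition to membership of the concatenated path in one fixed Borel subset of $\Omega$. Your handling of the infinite entries of $\hat{a}$ and the equivalence $\Phi(P,\omega)=0\iff g(\omega,\cdot,\cdot)=0$ $du\times P$-a.e.\ (via $e^{-u}\,du\sim du$) are both sound, so there is no gap.
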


\begin{proof}
  Let $A= \{\omega\in\Omega:\hat{a}_u(\omega^\tau) \in
         \bD_{u+\tau(\omega)}(\omega)~du\mbox{-a.e.}\}$.
  Then $A$ is a Borel subset of $\Omega$ by Assumption
  \ref{aspt:rsetmeas} and Fubini's theorem.
  Moreover, if $\bomega,\omega\in\Omega$, then
  $\bomega\otimes_\tau\omega\in A$ if and only if
  \[
    \hat{a}_u(\omega)=\hat{a}_u((\bomega\otimes_\tau\omega)^\tau)\in \bD_{u+\tau(\bomega)}(\bomega\otimes_\tau\omega)\equiv (\bD_{u+\tau})^{\tau,\bomega}(\omega)\quad du\mbox{-a.e.}
  \]
  Hence, given $P\in\fMa$, we have $P\in\cP_{\bD}(\tau,\bomega)$ if and only if
  \[
    P\{\omega\in\Omega: \bomega\otimes_\tau\omega\in A\}=1.
  \]
  Set $f=\1_A$; then
  $P\{\omega\in\Omega: \bomega\otimes_\tau\omega\in A\}=E^P[f^{\tau,\bomega}]$. Since $f$ is Borel-measurable, we have from Step~1 of the proof of Theorem~\ref{th:dpp} that the mapping
  $(P,\bomega)\mapsto E^P[f^{\tau,\bomega}]$ is again Borel-measurable. In view of Lemma~\ref{le:locMartMeasBorel}, it follows that
  \[
    \big\{(P,\bomega):\, \bomega\in\Omega,\, P\in\cP_\bD(\tau,\bomega)\big\}
    =\big\{(P,\bomega)\in\fMa\times\Omega:\,E^P[f^{\tau,\bomega}]=1\big\}
  \]
  is Borel-measurable.
\end{proof}

\begin{lemma}\label{le:localMartMeasInvariant}
  Let $\tau$ be a stopping time and $P\in \fM$. Then $P^{\tau,\omega}\in\fM$ for $P$-a.e.\ $\omega\in \Omega$.
\end{lemma}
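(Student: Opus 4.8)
The plan is to reduce the claim to the defining martingale conditions for $\fM$ recorded in~\eqref{eq:Bstopped} and to verify them under $P^{\tau,\omega}$ by conditioning the corresponding statements for $P$ on $\cF_\tau$. First I would make the correspondence between $P^{\tau,\omega}$ and the regular conditional probability $P^\omega_\tau$ explicit. Since $P^\omega_\tau$ is concentrated on paths agreeing with $\omega$ on $[0,\tau(\omega)]$ (so that $\tau$ equals the constant $\tau(\omega)$ on its support, by Galmarino's test), the definition $P^{\tau,\omega}(A)=P^\omega_\tau(\omega\otimes_\tau A)$ says precisely that the increment map $\eta\mapsto\eta^\tau$ (notation~\eqref{eq:shiftedPath}) pushes $P^\omega_\tau$ forward to $P^{\tau,\omega}$; hence $E^{P^{\tau,\omega}}[\Phi(B)]=E^{P^\omega_\tau}[\Phi(B^\tau)]$ for every bounded measurable functional $\Phi$. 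Under this correspondence the canonical process of $P^{\tau,\omega}$ becomes the increment process $B^\tau$ under $P^\omega_\tau$, the hitting time $\tau_n=\inf\{u:|B_u|\ge n\}$ becomes $R_n:=\inf\{u:|B^\tau_u|\ge n\}$, and a generator $A^u_m$ of $\cF_u$ corresponds to an event $\tilde A^u_m$ in the raw filtration $\cG^\tau_u:=\sigma(B^\tau_r:r\le u)$ of $B^\tau$.

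The crux is then a single fact about the original measure $P$. Writing $\cG_t:=\cF_{\tau+t}$ (so $\cG_0=\cF_\tau$), I would show that for each $i$ and $n$ the stopped increment process $Z^{(i,n)}_t:=B^{(i)}_{\tau+(t\wedge R_n)}-B^{(i)}_\tau$ is a bounded $\cG$-martingale under $P$. Boundedness is immediate from continuity of paths, since $|Z^{(i,n)}_t|\le|B^\tau_{t\wedge R_n}|\le n$. For the martingale property I would invoke the classical time-shift form of optional sampling: as $B^{(i)}$ is a local $P$-martingale, the shifted process $B^{(i)}_{\tau+\cdot}-B^{(i)}_\tau$ is a local $\cG$-martingale, and $R_n$ is a $\cG$-stopping time, so stopping at $R_n$ together with boundedness yields a true $\cG$-martingale. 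This is the step I expect to be the main obstacle, since it requires the shift/optional-sampling property for \emph{local} (not merely bounded) martingales on the raw filtration; I would handle it by passing to the $P$-augmentation to apply optional sampling and then noting that the resulting $P$-a.s.\ identities are insensitive to the augmentation.

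With $Z^{(i,n)}$ a bounded $\cG$-martingale in hand, I would descend to the conditional law. Fix a tuple $(i,n,u,v,m)$ with $u\le v$ rational; the corresponding event satisfies $\tilde A^u_m\in\cG^\tau_u\subseteq\cG_u$. Because $\cF_\tau=\cG_0\subseteq\cG_u$ and $Z^{(i,n)}$ is a $\cG$-martingale, the tower property gives $E^P[(Z^{(i,n)}_v-Z^{(i,n)}_u)\1_{\tilde A^u_m}\mid\cF_\tau]=E^P[\1_{\tilde A^u_m}E^P[Z^{(i,n)}_v-Z^{(i,n)}_u\mid\cG_u]\mid\cF_\tau]=0$ $P\as$ Evaluating the defining property $E^{P^\omega_\tau}[\,\cdot\,]=E^P[\,\cdot\mid\cF_\tau](\omega)$ of the regular conditional probability on the bounded, $\cF$-measurable integrand $(Z^{(i,n)}_v-Z^{(i,n)}_u)\1_{\tilde A^u_m}$, this shows $E^{P^\omega_\tau}[(Z^{(i,n)}_v-Z^{(i,n)}_u)\1_{\tilde A^u_m}]=0$ for $\omega$ outside a $P$-null set. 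Taking the union of these null sets over the countably many tuples $(i,n,u,v,m)$ produces a single $P$-null set off which, by the correspondence of the first paragraph and the characterization~\eqref{eq:Bstopped}, the stopped component $Y^{(i,n)}$ is a $P^{\tau,\omega}$-martingale for every $i$ and $n$; that is, $P^{\tau,\omega}\in\fM$.
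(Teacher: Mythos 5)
Your proposal is correct and follows essentially the same route as the paper: your pushforward correspondence $E^{P^{\tau,\omega}}[\Phi(B)]=E^{P^\omega_\tau}[\Phi(B^\tau)]$ is exactly the paper's $\widehat{X}$-device (with $\widehat{X}^{\tau,\omega}=X$), and your bounded $\cG$-martingale $Z^{(i,n)}$ is precisely the stopped shifted increment $B^\tau_{\cdot\wedge\sigma_n}$ whose conditional increments the paper shows to vanish given $\cF_\tau$ before invoking the regular conditional probability. The differences are expository only: you make explicit the optional-sampling/augmentation justification for the shifted local martingale and the countable-tuple bookkeeping needed to obtain a single $P$-null set, both of which the paper's proof leaves implicit.
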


\begin{proof}
  For simplicity of notation, we state the proof for the one-dimensional case ($d=1$). Recall the notation~\eqref{eq:shiftedPath}. Given any function $X$ on $\Omega$, we denote by $\widehat{X}$ the function defined by
  \[
    \widehat{X}(\omega):=X(\omega^\tau),\quad \omega\in \Omega.
  \]
  This definition entails that $\widehat{X}^{\tau,\omega}=X$ for any $\omega\in\Omega$, that $\widehat{B_u}=B^\tau_u$ for $u\geq 0$, and that $\widehat{X}$ is $\cF_{u+\tau}$-measurable if $X$ is $\cF_u$-measurable.

  Let $0\leq u\leq v$, $P\in \fM$ and let $f$ be a bounded $\cF_u$-measurable function. Moreover,
  fix $n\geq 1$ and let $\sigma_n=\inf\{u\geq 0:\,|B^\tau_u|\geq n\}$. If $Y:=Y^{(1,n)}$ is defined as in~\eqref{eq:Bstopped}, then
  \begin{align*}
    E^{P^{\tau,\omega}}\big[(Y_v-Y_u)f\big]
    &=E^{P^{\tau,\omega}}\big[(\widehat{Y_v}^{\tau,\omega}-\widehat{Y_u}^{\tau,\omega}){\widehat{f\,}}^{\tau,\omega}\big] \\
    &=E^P\big[(\widehat{Y_v}-\widehat{Y_u})\widehat{f} \,\big|\cF_\tau\big](\omega) \\
    &=E^P\big[\big(B^\tau_{v\wedge \sigma_n} - B^\tau_{u\wedge \sigma_n}\big)\widehat{f} \,\big|\cF_\tau\big](\omega) \\
    &=E^P\big[\big(B_{v\wedge \sigma_n+\tau} - B_{u\wedge\sigma_n+\tau}\big)\widehat{f} \,\big|\cF_\tau\big](\omega) \\
    &=0 \quad \mbox{for $P$-a.e.\ $\omega\in \Omega$.}
  \end{align*}
   This shows that $E^{P^{\tau,\omega}}[Y_v-Y_u|\cF_u]=0$ $P^{\tau,\omega}$-a.s.\ for $P$-a.e.\ $\omega\in \Omega$; i.e., $Y$ is a martingale under $P^{\tau,\omega}$.
\end{proof}

\begin{lemma}\label{le:shiftedVolatilityRandomG}
  Let $\tau$ be a stopping time and let $P\in\fMa$. For $P$-a.e.\ $\omega\in \Omega$, we have $P^{\tau,\omega} \in \fMa$ and
  \[
    \hat{a}_u(\tomega)=(\hat{a}_{u+\tau})^{\tau,\omega}(\tomega) \quad\mbox{for}\quad du\times P^{\tau,\omega}\mbox{-a.e.}\quad (u,\tomega)\in \R_+\times \Omega.
  \]
\end{lemma}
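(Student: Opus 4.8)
The plan is to reduce both assertions to a single pathwise shift identity for the quadratic-variation version $\br{B}$, namely
\[
  \br{B}_u(\tomega)=\br{B}_{u+\tau(\omega)}(\omega\otimes_\tau\tomega)-\br{B}_{\tau(\omega)}(\omega\otimes_\tau\tomega)\q\mbox{for all }u\geq0,
\]
valid for $P^{\tau,\omega}$-a.e.\ $\tomega$ and $P$-a.e.\ $\omega$. Writing $Z_u:=(\br{B})^\tau_u=\br{B}_{u+\tau}-\br{B}_\tau$ as a process on $\Omega$ (the notation $X^\tau$ of~\eqref{eq:shiftedPath} applied to $X=\br{B}$), this identity reads $\br{B}=Z^{\tau,\omega}$ $P^{\tau,\omega}$-a.s. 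Once it is established, both claims follow quickly: differencing the identity multiplies out the constant $\br{B}_{\tau(\omega)}$, so taking $\limsup_n$ along the fixed sequence $1/n$ in~\eqref{eq:hata} yields $\hat{a}_u(\tomega)=\hat{a}_{u+\tau(\omega)}(\omega\otimes_\tau\tomega)=(\hat{a}_{u+\tau})^{\tau,\omega}(\tomega)$ for all $u>0$; and the absolute continuity of $\br{B}$ under $P^{\tau,\omega}$ will be inherited from that of $Z$ under $P$.

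To prove the identity, first recall that $P^{\tau,\omega}\in\fM$ for $P$-a.e.\ $\omega$ by Lemma~\ref{le:localMartMeasInvariant}, so that $\br{B}=\br{B}^{P^{\tau,\omega}}$ $P^{\tau,\omega}$-a.s.; thus $\br{B}$ is the quadratic covariation of $B$ under $P^{\tau,\omega}$. It then suffices to show that $Z^{\tau,\omega}$ is \emph{also} the quadratic covariation of $B$ under $P^{\tau,\omega}$, since the quadratic covariation is the unique continuous finite-variation process making the relevant product a local martingale. Under $P$, the shifted process $B^\tau_u=B_{u+\tau}-B_\tau$ is a continuous local martingale in the filtration $(\cF_{u+\tau})$, with $\langle B^{\tau,(i)},B^{\tau,(j)}\rangle^P=\br{B}^{(ij)}_{\cdot+\tau}-\br{B}^{(ij)}_\tau=Z^{(ij)}$ $P$-a.s. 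Hence, for each pair $i,j$, the process $M^{(ij)}_u:=B^{\tau,(i)}_uB^{\tau,(j)}_u-Z^{(ij)}_u$ is a $P$-local martingale adapted to $(\cF_{u+\tau})$.

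The core step is to transfer this local-martingale property to $M^{\tau,\omega}$ under $P^{\tau,\omega}$, exactly by the computation of Lemma~\ref{le:localMartMeasInvariant}. Using $(B^\tau_u)^{\tau,\omega}=B_u$ one checks $M^{\tau,\omega}_u=B^{(i)}_uB^{(j)}_u-(Z^{(ij)}_u)^{\tau,\omega}$, so identifying $Z^{\tau,\omega}$ as the covariation is the same as showing $M^{\tau,\omega}$ is a $P^{\tau,\omega}$-local martingale. For $0\leq u\leq v$ and bounded $\cF_u$-measurable $g$, writing $g=\widehat g^{\tau,\omega}$ with $\widehat g(\omega):=g(\omega^\tau)$ (which is $\cF_{u+\tau}$-measurable), the identity $E^{P^{\tau,\omega}}[\Phi^{\tau,\omega}]=E^P[\Phi|\cF_\tau](\omega)$ gives
\[
  E^{P^{\tau,\omega}}\big[(M^{\tau,\omega}_v-M^{\tau,\omega}_u)\,g\big]=E^P\big[(M_v-M_u)\,\widehat g\,\big|\,\cF_\tau\big](\omega)=0\q\mbox{for }P\mbox{-a.e.\ }\omega,
\]
after stopping at $\sigma_n=\inf\{u:|B^\tau_u|\geq n\}$, under which $B^\tau$ is bounded and $Z_{\cdot\wedge\sigma_n}$ is integrable, so that $M_{\cdot\wedge\sigma_n}$ is a genuine martingale (this needs no boundedness of $\bD$). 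Since $\sigma_n^{\tau,\omega}=\inf\{u:|B_u|\geq n\}$ localizes under $P^{\tau,\omega}$, this proves $M^{\tau,\omega}$ is a $P^{\tau,\omega}$-local martingale and hence the shift identity. I expect this transfer to be the main obstacle: the $\widehat{\cdot}$/$(\cdot)^{\tau,\omega}$ bookkeeping and the localization must be arranged so that the \emph{quadratic} process $M$ is handled by the same conditional-expectation computation as the linear process in Lemma~\ref{le:localMartMeasInvariant}, and one must then invoke uniqueness of the covariation to pass from the local-martingale statement to the pathwise equality.

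Finally, for absolute continuity: since $P\in\fMa$, the map $u\mapsto Z_u(\omega')=\br{B}_{u+\tau}(\omega')-\br{B}_\tau(\omega')$ is absolutely continuous for $P$-a.e.\ $\omega'$; the corresponding event lies in $\cF$ and has full $P$-measure, so by the disintegration $\int P^\omega_\tau\,dP=P$ it has full $P^\omega_\tau$-measure for $P$-a.e.\ $\omega$, i.e.\ $Z^{\tau,\omega}$ is absolutely continuous $P^{\tau,\omega}$-a.s. Combined with $\br{B}=Z^{\tau,\omega}$ this yields $P^{\tau,\omega}\in\fMa$, completing the proof.
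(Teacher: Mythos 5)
Your proof is correct, and it funnels into the very same pathwise identity on which the paper's proof hinges, namely $\br{B}_u(\tomega)=(\br{B}_{u+\tau})^{\tau,\omega}(\tomega)-\br{B}_\tau(\omega)$ for $P^{\tau,\omega}$-a.e.\ $\tomega$ and $P$-a.e.\ $\omega$; from there both arguments conclude identically (differencing along the fixed sequence $1/n$ for $\hat{a}$, and pushing the full-$P$-measure absolute-continuity event through the disintegration). But you establish that identity by a genuinely different route. The paper exploits the fact that $\br{B}$ is defined \emph{pathwise} (via Bichteler's integration): under $P$ one has $\br{B_{\cdot+\tau}-B_\tau}=\br{B}_{\cdot+\tau}-\br{B}_\tau$ a.s.\ in the shifted filtration, and since the shifted canonical process evaluated along a concatenated path $\omega\otimes_\tau\tomega$ traces out exactly the path $\tomega$, the evaluation $\br{B_{\cdot+\tau}-B_\tau}_u(\omega\otimes_\tau\tomega)=\br{B}_u(\tomega)$ holds \emph{identically}; the a.s.\ identity under $P$ then transfers to the conditional measures $P^\omega_\tau$ by measure theory alone, with no martingale argument under $P^{\tau,\omega}$ beyond Lemma~\ref{le:localMartMeasInvariant}. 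You never invoke this pathwise-functional property: instead you characterize the quadratic variation under $P^{\tau,\omega}$ through its local-martingale property, transferring the martingale property of $B^{\tau,(i)}B^{\tau,(j)}-Z^{(ij)}$ (with your $Z=\br{B}_{\cdot+\tau}-\br{B}_\tau$) by the same conditional-expectation computation as in Lemma~\ref{le:localMartMeasInvariant}, and then invoke uniqueness of the covariation. This is sound --- your localization is legitimate, since a continuous local martingale stopped where it is bounded is a true martingale with integrable covariation, and $\sigma_n^{\tau,\omega}=\tau_n$ indeed localizes under $P^{\tau,\omega}$ --- but it costs more bookkeeping than you indicate: the $P$-null set in your key display depends on $(u,v,g,n)$, so the argument must be run over rational $u\le v$, a countable generating class of each $\cF_u$, and all $n$; and for ``local martingale'' and the uniqueness argument to apply at all you need $M^{\tau,\omega}$, i.e.\ $(Z_u)^{\tau,\omega}$, to be $\cF_u$-measurable for fixed $\omega$. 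That is true but should be said: $Z_u$ is $\cF_{u+\tau}$-measurable, so Galmarino's test applied to the stopping time $u+\tau$ shows $(Z_u)^{\tau,\omega}(\tomega)$ depends only on $\tomega|_{[0,u]}$, whence $\cF_u$-measurability. What your route buys is independence from the pathwise construction: you use only the defining property $\br{B}=\br{B}^P$ $P$-a.s.\ for $P\in\fM$, so your argument would survive with any version of the quadratic variation process. What the paper's route buys is brevity: all the probabilistic work stays under $P$, and the passage to $P^{\tau,\omega}$ is a triviality about full-measure sets under disintegration.
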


\begin{proof}
  The assertion is quite similar to a result of \cite{SonerTouziZhang.2010dual}. The following holds for fixed $\omega\in\Omega$, up to a $P$-nullset. In Lemma~\ref{le:localMartMeasInvariant}, we have already shown that $P^{\tau,\omega} \in \fM$. We observe that
  \[
    \br{B_{\cdot+\tau}-B_\tau}_u(\omega')=\br{B}_{u+\tau}(\omega') - \br{B}_\tau(\omega')\quad\mbox{for}\quad P\mbox{-a.e.}\;\omega'\in\Omega,
  \]
  which implies that
  \[
    \br{B_{\cdot+\tau}-B_\tau}_u(\omega')=\br{B}_{u+\tau}(\omega') - \br{B}_\tau(\omega')
\mbox{ for }
P^\tau_\omega\mbox{-a.e.}\;\omega'\in\{\omega\otimes_\tau\tomega:\, \tomega\in\Omega\}.
  \]
  Noting that
  \[
    \br{B_{\cdot+\tau}-B_\tau}_u(\omega\otimes_\tau\tomega)=\br{B}_{u}(\tomega)
  \]
  and
  \[
    \br{B}_{u+\tau}(\omega\otimes_\tau\tomega) - \br{B}_\tau(\omega\otimes_\tau\tomega)=(\br{B}_{u+\tau})^{\tau,\omega}(\tomega) - \br{B}_\tau(\omega),
  \]
  we deduce that
  \[
    \br{B}_{u}(\tomega)= (\br{B}_{u+\tau})^{\tau,\omega}(\tomega) - \br{B}_\tau(\omega)
  \]
  for $P^{\tau,\omega}$-a.e.\ $\tomega\in\Omega$. The result follows.
\end{proof}

\begin{lemma}\label{le:localMartMeasStable}
  Let $s\in\R_+$, let $\tau\geq s$ be a stopping time, let $\bomega\in\Omega$ and $\theta:=\tau^{s,\bomega}-s$. Let $P\in\fM$, let $\nu: \Omega \to \fPO$  be an $\cF_\theta$-measurable kernel taking values in $\fM$ $P$-a.s., and let $\bar{P}$ be defined as in~\eqref{eq:defPbar}. Then $\bar{P}\in\fM$.
\end{lemma}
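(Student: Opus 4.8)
The plan is to invoke the characterization recorded before~\eqref{eq:Bstopped}: it suffices to show that for each $1\le i\le d$ and $n\ge 1$ the stopped process $Y:=Y^{(i,n)}=B^{(i)}_{\cdot\wedge\tau_n}$ is a $\bar{P}$-martingale. Integrability is immediate since $|Y|\le n$, so the whole matter reduces to verifying $E^{\bar{P}}[(Y_v-Y_u)f]=0$ for all $0\le u\le v$ and every bounded $\cF_u$-measurable $f$. Throughout I would use two consequences of the pasting formula~\eqref{eq:defPbar}: first, the disintegration $E^{\bar{P}}[g]=\int E^{\nu(\omega)}[g^{\theta,\omega}]\,P(d\omega)$ for bounded Borel $g$ (by the usual monotone-class extension from indicators); and second, that $\bar{P}=P$ on $\cF_\theta$, because $(\1_A)^{\theta,\omega}$ is constant in $\omega'$ and equal to $\1_A(\omega)$ when $A\in\cF_\theta$, using $\theta(\omega\otimes_\theta\omega')=\theta(\omega)$.

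The key device is the splitting $Y_t=M_t+N_t$ with $M_t:=Y_{t\wedge\theta}$ and $N_t:=Y_t-Y_{t\wedge\theta}$, and I would prove that $M$ and $N$ are separately $\bar{P}$-martingales. The process $M$ is the $P$-martingale $Y$ stopped at $\theta$, hence a $P$-martingale by optional stopping. Since the stopping time $t\wedge\tau_n\wedge\theta$ never exceeds $\theta$, the path up to that time sees only the $\omega$-part of a concatenation, so $(M_t)^{\theta,\omega}=M_t(\omega)$; the disintegration then gives $E^{\bar{P}}[(M_v-M_u)f]=E^P[(M_v-M_u)\,\Phi]$ with $\Phi(\omega):=E^{\nu(\omega)}[f^{\theta,\omega}]$, which is $\cF_\theta$-measurable. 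Now $M_v-M_u$ vanishes on $\{\theta<u\}$, while on $\{\theta\ge u\}\in\cF_u$ one has $f^{\theta,\omega}=f(\omega)$ and hence $\Phi=f$ there; thus the integrand equals $(M_v-M_u)f\,\1_{\{\theta\ge u\}}$ with an $\cF_u$-measurable second factor, and the $P$-martingale property of $M$ yields $E^P[(M_v-M_u)f\,\1_{\{\theta\ge u\}}]=0$.

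The substance of the proof is the analysis of $N$, which is where $\tau_n$ has to be tracked across the concatenation. Fixing $\omega$ with $r:=\theta(\omega)$, a direct computation of $N_t$ on $\eta=\omega\otimes_\theta\omega'$ shows that $N$ vanishes identically when the $\omega$-part exits the ball before $r$, whereas on the complementary event $\{\tau_n(\omega)>\theta(\omega)\}$ one has $(N_t)^{\theta,\omega}(\omega')=Z_{(t-r)^+}(\omega')$, where $Z_s:=B^{(i)}_{s\wedge\sigma}$ and $\sigma:=\inf\{s\ge0:\,|\omega_r+B_s|\ge n\}$ is the exit time of the shifted canonical process. Since $|B_{s\wedge\sigma}|\le n+|\omega_r|$, the process $Z$ is bounded, and as $B$ is a local $\nu(\omega)$-martingale (because $\nu(\omega)\in\fM$ for $P$-a.e.\ $\omega$), $Z$ is in fact a genuine $\nu(\omega)$-martingale. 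The identity $E^{\nu(\omega)}[(N_v-N_u)^{\theta,\omega}f^{\theta,\omega}]=0$ then follows by splitting on $\cF_u$: on $\{\theta\ge u\}$ one has $(u-r)^+=0$ and $f^{\theta,\omega}$ constant, so the expectation is a multiple of $E^{\nu(\omega)}[Z_{(v-r)^+}]=Z_0=0$; on $\{\theta<u\}$ the function $f^{\theta,\omega}$ is $\cF_{u-r}$-measurable in $\omega'$, and the martingale property of $Z$ gives $E^{\nu(\omega)}[(Z_{v-r}-Z_{u-r})f^{\theta,\omega}]=0$. Integrating against $P(d\omega)$ shows $N$ is a $\bar{P}$-martingale, and combining with the previous paragraph gives that $Y=M+N$ is a $\bar{P}$-martingale, whence $\bar{P}\in\fM$.

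I would expect the main obstacle to be the bookkeeping in the $N$-step: correctly identifying $(N_\cdot)^{\theta,\omega}$ as the shifted, ball-stopped canonical process $Z$, and justifying that boundedness upgrades its local $\nu(\omega)$-martingale property to a true martingale property, together with the care needed in the straddling case $\{\theta<u<v\}$, where the $\cF_u$ test function must be recognized as $\cF_{u-r}$-measurable after shifting.
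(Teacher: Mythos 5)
Your proof is correct and takes essentially the same route as the paper's: both split the increments of the stopped process $Y^{(i,n)}$ at $\theta$, handling the pre-$\theta$ part via $P\in\fM$ together with $\bar{P}=P$ on $\cF_\theta$, and the post-$\theta$ part via the disintegration $\bar{P}^{\theta,\omega}=\nu(\omega)$ and the observation that the shifted, ball-stopped canonical process is a bounded martingale under any element of $\fM$. Your decomposition $Y=Y_{\cdot\wedge\theta}+(Y-Y_{\cdot\wedge\theta})$ is just a regrouping of the paper's four-term identity in Step~2 of its proof, and your case split on $\{\theta\ge u\}$ versus $\{\theta<u\}$ plays exactly the role of the paper's Step~1 for stopping times $\theta\le\rho\le\rho'$.
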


\begin{proof}
  We state the proof for the one-dimensional case $d=1$. Let $n\geq1$ and
  let $Y=Y^{(1,n)}$ be defined as in~\eqref{eq:Bstopped}.

  \medskip
  \noindent \emph{Step~1.}
  Let $\theta\leq \rho\leq \rho'$ be stopping times and let $f$
  be a bounded $\cF_\rho$-measurable function; we show that
  $E^{\bar{P}}[(Y_{\rho'}-Y_\rho)f]=0$. For this, it suffices to show that
  $E^{\bar{P}}[(Y_{\rho'}-Y_\rho)f|\cF_\theta]=0$ $\bar{P}$-a.s.

  Fix $\omega\in\Omega$ such that $\bar{P}^{\theta,\omega}=\nu(\omega)\in\fM$; by Lemma~\ref{le:rcpdOfBarP}, such $\omega$ form a set of $\bar{P}$-measure one. %
  We observe that $M_u=Y_{u+\theta(\omega)}^{\theta,\omega}$, $u\geq0$ defines a martingale under any element of $\fM$. Letting
  \[
    \varrho:=(\rho-\theta)^{\theta,\omega}\quad\mbox{and}\quad {\varrho'}:=({\rho'}-\theta)^{\theta,\omega}
  \]
 and recalling that $\nu(\omega)\in\fM$ and that $f^{\theta,\omega}$ is $\cF_{\varrho}$-measurable, we deduce that
 \begin{align*}
  E^{\bar{P}}[(Y_{\rho'}-Y_\rho)f|\cF_\theta](\omega)
  &=E^{\bar{P}^{\theta,\omega}}\big[\big((Y_{\rho'})^{\theta,\omega}-(Y_\rho)^{\theta,\omega}\big)f^{\theta,\omega}\big] \\
  &= E^{\nu(\omega)}\big[\big((Y^{\theta,\omega})_{\varrho'+\theta(\omega)}-(Y^{\theta,\omega})_{\varrho+\theta(\omega)}\big)f^{\theta,\omega}\big]\\
  &= E^{\nu(\omega)}[(M_{\varrho'}-M_{\varrho})f^{\theta,\omega}]\\
  &=0
 \end{align*}
  for $P$-a.e.\ and $\bar{P}$-a.e.\ $\omega\in\Omega$.
  \pagebreak[1]

  \medskip
  \noindent \emph{Step~2.}
  Fix $0\leq s\leq t$ and let $f$ be a bounded $\cF_s$-measurable function;
  we show that $E^{\bar{P}}[(Y_t-Y_s)f]=0$.
  Indeed, we have the trivial identity
  \begin{align*}
    (Y_t-Y_s)f
    & = (Y_{t\vee\theta}-Y_{s\vee\theta})f\1_{\theta\leq s} + (Y_{t\vee\theta}-Y_\theta)f\1_{s<\theta\leq t}\\
    & \phantom{=\;}+ (Y_\theta-Y_{s\wedge\theta})f\1_{s<\theta\leq t}+ (Y_{t\wedge\theta}-Y_{s\wedge\theta})f\1_{t<\theta}\;.
  \end{align*}
  The $\bar{P}$-expectation of the first two summands vanishes by Step 1, whereas the $\bar{P}$-expectation of the last two summands vanishes because
  $\bar{P}=P$ on $\cF_\theta$ and $P\in\fM$. This completes the proof.
\end{proof}

\begin{lemma}\label{le:restrictedLocMartMeasStable}
  Let $s\in\R_+$, let $\tau\geq s$ be a stopping time, let $\bomega\in\Omega$
and $P\in\cP_\bD(s,\bomega)$. Moreover, let $\theta:=\tau^{s,\bomega}-s$,
let $\nu: \Omega \to \fPO$ be an $\cF_\theta$-measurable kernel such that
$\nu(\omega)\in \cP_\bD(\tau,\bomega\otimes_s\omega)$ for
$P$-a.e.\ $\omega\in\Omega$ and let $\bar{P}$ be defined as
in~\eqref{eq:defPbar}. Then $\bar{P}\in\cP_\bD(s,\bomega)$.
\end{lemma}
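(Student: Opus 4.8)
The plan is to verify the two defining properties of $\cP_\bD(s,\bomega)$ for $\bar P$, namely that $\bar P\in\fMa$ and that its volatility $\hat a$ lies in the prescribed random set $\bD^{s,\bomega}$ for $du\times\bar P$-almost every $(u,\omega)$. The first property splits naturally: $\bar P\in\fM$ follows directly from Lemma~\ref{le:localMartMeasStable}, since each $\nu(\omega)\in\cP_\bD(\tau,\bomega\otimes_s\omega)\subseteq\fMa\subseteq\fM$ and $P\in\fM$ as well. The absolute-continuity part of $\fMa$ and the set-membership of $\hat a$ I would handle together, because they are both statements about the volatility of $\bar P$, which I can compute through the disintegration furnished by Lemma~\ref{le:rcpdOfBarP}.

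The key computational step is to relate the volatility of $\bar P$ to the volatilities of its conditional pieces $\nu(\omega)=\bar P^{\theta,\omega}$. First I would decompose the quadratic variation: on the time interval $[0,\theta]$ the measure $\bar P$ agrees with $P$ (since $\bar P=P$ on $\cF_\theta$), so there $\hat a$ inherits the good behaviour of $P\in\cP_\bD(s,\bomega)$, giving $\hat a_u(\omega)\in\bD^{s,\bomega}_{u+s}(\omega)$ for $du\times P$-a.e.\ $(u,\omega)$ with $u< \theta(\omega)$. After time $\theta$, I would invoke Lemma~\ref{le:shiftedVolatilityRandomG} applied to $\bar P$ and the stopping time $\theta$: for $\bar P$-a.e.\ $\omega$ one has $\bar P^{\theta,\omega}\in\fMa$ and the shifted volatility $(\hat a_{u+\theta})^{\theta,\omega}$ coincides with the volatility $\hat a_u$ computed under $\bar P^{\theta,\omega}=\nu(\omega)$. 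Since $\nu(\omega)\in\cP_\bD(\tau,\bomega\otimes_s\omega)$, its volatility satisfies $\hat a_u\in\bD^{\tau,\bomega\otimes_s\omega}_{u+\tau}$ for $du\times\nu(\omega)$-a.e.\ $(u,\tomega)$. The main bookkeeping task is to check that the random set governing $\nu(\omega)$, when unshifted back via the concatenation at $\theta$, matches $\bD^{s,\bomega}_{\,\cdot+s}$ on the time region after $\theta$; this is precisely the content of the adaptedness and the flow identity $\theta(\omega)+s=\tau(\bomega\otimes_s\omega)$, so that $\bD_{u+\tau(\bomega\otimes_s\omega)}(\bomega\otimes_s\omega\otimes_\theta\cdots)$ agrees with the relevant section of $\bD^{s,\bomega}$.

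Putting the two time regions together and integrating the a.s.\ statements against $P(d\omega)$ (for the pre-$\theta$ part) and against $\bar P(d\omega)$ via the disintegration (for the post-$\theta$ part), I would conclude that $\hat a_u(\omega)\in\bD^{s,\bomega}_{u+s}(\omega)$ for $du\times\bar P$-a.e.\ $(u,\omega)$, and that $\br{B}$ is $\bar P$-a.s.\ absolutely continuous with density $\hat a$, establishing $\bar P\in\cP_\bD(s,\bomega)$.

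The step I expect to be the main obstacle is the careful identification of the random sets across the concatenation, i.e.\ showing that the volatility constraint $\bD_{u+\tau}(\bomega\otimes_s\omega)$ imposed on $\nu(\omega)$ translates, after composing the two shifts (at $s$ and at $\theta$), into exactly the constraint $\bD^{s,\bomega}_{u+s}$ demanded for $\bar P$. This requires tracking how $\hat a$ and the set-valued process $\bD$ transform under the shift operators $\cdot^{\tau,\omega}$ and the concatenation $\otimes_\theta$, and invoking the adaptedness from Assumption~\ref{aspt:rsetmeas} together with the stopping-time flow identity; the measure-theoretic gluing of the two time regions into a single $du\times\bar P$-a.e.\ statement must then be done using Lemma~\ref{le:rcpdOfBarP} to pass from conditional almost-sure statements under $\nu(\omega)$ to an almost-sure statement under $\bar P$.
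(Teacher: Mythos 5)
Your proposal matches the paper's proof essentially step for step: $\bar P\in\fM$ via Lemma~\ref{le:localMartMeasStable}, the decomposition at $\theta$ with the pre-$\theta$ constraint inherited from $\bar P=P$ on $\cF_\theta$ and $P\in\cP_\bD(s,\bomega)$, the post-$\theta$ constraint via the disintegration $\bar P^{\theta,\cdot}=\nu(\cdot)$ of Lemma~\ref{le:rcpdOfBarP} combined with Lemma~\ref{le:shiftedVolatilityRandomG}, the flow identity $\theta(\omega)+s=\tau(\bomega\otimes_s\omega)$, and a concluding Fubini argument. The one point to watch is ordering: Lemma~\ref{le:shiftedVolatilityRandomG} takes $\fMa$ as a hypothesis, so the absolute continuity of $\br{B}$ under $\bar P$ must be settled before that lemma is applied to $\bar P$ (the paper does this, relegating it to a ``similar but simpler'' argument that only uses the quadratic-variation identities valid under $\fM$), rather than genuinely ``together'' with the set-membership as you phrase it.
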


\begin{proof}
  Lemma~\ref{le:localMartMeasStable} yields that $\bar{P}\in\fM$. Hence, we need to show that $\br{B}$ is absolutely continuous $\bar{P}$-a.s.\ and that
  \[
    (du\times \bar{P})\big\{(u,\omega)\in [0,\infty)\times \Omega: \hat{a}_u(\omega)\notin \bD_{u+s}^{s,\bomega}(\omega)\big\}=0.
  \]
  Since $\bar{P}=P$ on $\cF_{\theta}$ and $P\in\cP(s,\bomega)$, we
  have that $d\br{B}_u\ll du$ on $[\![0,\theta]\!]$ $\bar{P}$-a.s.\  and
  \[
    \hat{a}_u(\omega)\in \bD_{u+s}^{s,\bomega}(\omega)\quad\mbox{for}\quad du\times \bar{P}\mbox{-a.e.}\quad (u,\omega)\in [\![0,\theta]\!].
  \]
  Therefore, we may focus on showing that $d\br{B}_u\ll du$ on $[\![\theta,\infty[\![$ $\bar{P}$-a.s.\ and
  \[
    A:=\big\{(u,\omega)\in [\![\theta,\infty[\![: \hat{a}_u(\omega)\notin \bD_{u+s}^{s,\bomega}(\omega)\big\}
  \]
  is a $du\times \bar{P}$-nullset. We prove only the second assertion; the proof of the absolute continuity is similar but simpler.

  We first observe that $(\1_A)^{\theta,\omega}$ is the indicator function of the set
  \[
    A^{\theta,\omega}:=\big\{(u,\omega')\in [\![\theta(\omega),\infty[\![: \hat{a}_u^{\theta,\omega}(\omega')\notin \bD_{u+s}^{\tau,\bomega\otimes_s\omega}(\omega')\big\}.
  \]
  Since $\nu(\cdot)=\bar{P}^{\theta,\cdot}$ $P$-a.s.\ by Lemma~\ref{le:rcpdOfBarP}, it follows from Lemma~\ref{le:shiftedVolatilityRandomG}, the identity $\theta(\omega)+s=\tau(\bomega\otimes_s\omega)$, and $\nu(\cdot)\in \cP_\bD(\tau,\bomega\otimes_s\cdot)$ $P$-a.s., that
  \begin{align*}
    \big(du&\times \nu(\omega)\big)(A^{\theta,\omega})\\
    &=\big(du\times \nu(\omega)\big)\big\{(u,\omega')\in [\![\theta(\omega),\infty[\![: \hat{a}_u^{\theta,\omega}(\omega')\notin \bD_{u+s}^{\tau,\bomega\otimes_s\omega}(\omega')\big\}\\
    &=\big(dr\times \nu(\omega)\big)\big\{(r,\omega')\in [\![0,\infty[\![: \hat{a}_r(\omega')\notin \bD_{r+\tau(\bomega\otimes_s\omega)}((\bomega\otimes_s\omega)\otimes_\tau\omega')\big\}\\
    &=0\quad \mbox{for $P$-a.e.\ $\omega\in\Omega$.}
  \end{align*}
   Using Fubini's theorem, we conclude that
  \begin{align*}
    (du\times\bar{P})(A)
    &= \iiint(\1_A)^{\theta,\omega}(u,\omega')\,du \,\nu(d\omega';\omega)\,P(d\omega)\\
    & = \int \big(du\times\nu(\omega)\big)(A^{\theta,\omega}) \,P(d\omega)\\
    & =0
  \end{align*}
  as claimed.
\end{proof}

\begin{proof}[Proof of Theorem~\ref{th:assumptionSatisfiedForRandomGexp}]
  The validity of Assumption~\ref{as:invarianceAndPasting}(i) is a direct consequence of Lemma~\ref{le:randomGmeasuresBorel}, Assumption~\ref{as:invarianceAndPasting}(ii) follows from Lemma~\ref{le:shiftedVolatilityRandomG}, and  Assumption~\ref{as:invarianceAndPasting}(iii) is guaranteed by Lemma~\ref{le:restrictedLocMartMeasStable}.
\end{proof}

\section{Counterexamples}\label{se:counterex}

In previous constructions of the $G$-expectation, the conditional
$G$-expectation $\cE_t=\cE^\bD_t$ is defined (up to polar sets) on the
linear space $\mathbb{L}^1_G$, the completion of $C_b(\Omega)$ under the norm
$\cE_0(|\cdot|)$. This space coincides with the set of functions on
$\Omega$ that are $\cP_\bD$-uniformly integrable and admit a
$\cP_\bD$-quasi-continuous version; c.f.\
\cite[Theorem\,25]{DenisHuPeng.2010}.

Our results constitute a substantial extension
in that our functional $\cE_t$ is defined pathwise and for
all Borel-measurable functions.  The price we pay for this is that our
construction does not guarantee that $\cE_t$ is itself Borel-measurable,
so that we must extend consideration to the larger class of upper
semianalytic functions.  This raises several natural questions:

\begin{enumerate}[topsep=3pt, partopsep=0pt, itemsep=1pt,parsep=2pt]
  \item Is the extension of $\cE_t$ from continuous to Borel functions
  unique?
  \item Is it really necessary to consider non-Borel functions?
  Can we regain Borel-measurability by modifying $\cE_t$ on a polar set?
  \item The upper semianalytic functions do not form a linear
  space.  Is it possible to define $\cE_t$ on a linear space that includes
  all Borel functions?
  \item Does there exist an alternative solution to the aggregation
  problem~\eqref{eq:aggreg} that avoids the limitations of our construction?
\end{enumerate}
We will presently show that the answer to each of these questions is
negative even in the fairly regular setting of $G$-expectations. This
justifies our construction and its limitations.

\subsection{$\cE_t$ Is Not Determined by Continuous Functions}

The following examples illustrate that the extension of the
$G$-expectation from $C_b(\Omega)$ to Borel functions is not unique
(unless $\bD$ is a singleton). This is by no means surprising, but we would like to remark that no esoteric
functions need to be cooked up for this purpose.

\begin{example}
   {\textrm
   In dimension $d=1$, consider the sets  $\bD=\{1,2\}$ and $\bD'=[1,2]$, and let $\cP_\bD$ and $\cP_{\bD'}$ be the corresponding sets of measures as in Section~\ref{se:randomG}.
   Then $\cE_t^\bD$ and $\cE_t^{\bD'}$ coincide on the bounded continuous functions:
   \[
     \sup_{P\in \cP_{\bD}} E^P[\xi^{t,\omega}]=\sup_{P\in \cP_{\bD'}} E^P[\xi^{t,\omega}]\quad\mbox{for all}\quad \xi\in C_b(\Omega).
   \]
This can be seen using the PDE construction in~\cite[Sect.\,3]{DenisHuPeng.2010}, or
   by showing directly that $\cP_{\bD'}$ is the closed convex hull of $\cP_\bD$ in $\fPO$.
Of course, $\cE_t^\bD$ and $\cE_t^{\bD'}$ then also coincide on the
completion $\mathbb{L}^1_G$ of $C_b(\Omega)$ under $\cE_0^{\bD}{(|\cdot|)}$.

On the other hand, $\cE_t^\bD$ and $\cE_t^{\bD'}$ do not coincide on
the set of Borel-measurable functions. For instance, let
$A=\{\int_0^\infty|\hat{a}_u-3/2|\,du=0\}$ be the ``set of paths with
volatility $3/2$''. Then $A$ is Borel-measurable, and we clearly have
$\cE_t^{\bD'}(\1_A)=1$  and $\cE_t^{\bD}(\1_A)=0$ for all $t\geq0$.
   }
\end{example}

\begin{example}
  {\textrm
  Still in dimension $d=1$, consider the sets  $\bD=[1,2)$ and
  $\bD'=[1,2]$. Then $\cP_{\bD'}$ is the weak closure of $\cP_\bD$, so that
    $\cE_t^{\bD}$ and $\cE_t^{\bD'}$ coincide on bounded (quasi-)continuous functions.
   On the other hand, consider the set $A=\{\br{B}_1\ge 2\}$.
   Then $A$ is Borel-measurable, and we have $\cE_0^{\bD'}(\1_A)=1$
  and $\cE_0^{\bD}(\1_A)=0$.

   Recalling that $\br{B}_1$ admits a quasi-continuous version (cf.\
  \cite[Lem.\,2.10]{DenisMartini.06}), this also shows that, even if $\xi$ is quasi-continuous and $C\subseteq\mathbb{R}$
  is a closed set, the  event $\1_{\xi\in C}$ need not be
  quasi-continuous.
 }
\end{example}

Both of the above examples show that the $G$-expectation defined on quasi-continuous functions does not uniquely
determine ``$G$-probabilities'' even of quite reasonable sets.

\subsection{$\cE_t$ Cannot Be Chosen Borel}

The following example shows that the conditional $G$-expectation
$\cE_t(\xi)$ of a bounded, Borel-measurable random variable $\xi$ need not
be Borel-measurable. More generally, it shows that $\cE_t(\xi)$ need not
even admit a Borel-measurable version; i.e., there is no Borel-measurable
$\psi$ such that $\psi=\cE_t(\xi)$ $P$-a.s.\ for all $P\in\cP_\bD$.
Therefore, redefining $\cE_t(\xi)$ on a polar set does not alleviate
the measurability problem. This illustrates the necessity of using
analytic sets.

\begin{example}\label{ex:nonBorel}
  {\textrm
  Consider the set $\bD=[1,2]$ in dimension $d=1$, and let
  $\cE_t$ be the $G$-expectation corresponding to the set of
  measures $\cP_{\bD}$ as defined in Section~\ref{se:Gexp}.
  Choose any analytic set $A\subseteq [1,2]$
  that is not Borel, and a Borel-measurable
  function $f:[1,2]\to[1,2]$ such that $f([1,2])=A$ (the
  existence of $A$ and $f$ is classical, cf.\
  \cite[Cor.\ 8.2.17, Cor.\ 8.2.8, and Thm.\ 8.3.6]{Cohn.80}).
  Let $C\subseteq [1,2]\times[1,2]$ be
  the graph of $f$, and define the random variable
  \[
    \xi=\1_C\big(\br{B}_2-\br{B}_1,\br{B}_1\big).
  \]
  Then clearly $\xi$ is
  Borel-measurable.  On the other hand, let $P_x$ be the law of
  $\sqrt{x}W$, where $W$ is a standard Brownian motion and $x\in[1,2]$.  Then $P_x\in\cP_\bD$ and
  $P_x\{\br{B}_1=x\}=1$ for every $x\in[1,2]$.  Moreover, it is clear that
  for any $P\in\cP_\bD$, we must have $P\{\br{B}_1\in[1,2]\}=1$. Using the definition of $\cE_1$, we obtain that
  \begin{align*}
  	\cE_1(\xi)(\omega)
    &= \sup_{P\in\cP_\bD}E^P\big[\1_C\big(\br{B}_1,\br{B}_1(\omega)\big)\big] \\
    &= \sup_{x\in[1,2]}\1_C\big(x,\br{B}_1(\omega)\big)\\
    &=\1_A(\br{B}_1(\omega)).	
  \end{align*}
  We claim that $\cE_1(\xi)=\1_A(\br{B}_1)$ is not Borel-measurable.
  Indeed, note that
  \[
  	\1_A(x)=\int \cE_1(\xi)(\omega)\,P_x(d\omega)
  \]
  for all $x\in[1,2]$. But $x\mapsto P_x$ is clearly Borel-measurable, and
  acting a Borel kernel on a Borel function necessarily yields a Borel
  function. Therefore, as $A$ was chosen to be non-Borel, we have shown that
  $\cE_1(\xi)$ is non-Borel.

  The above argument also shows that there cannot exist
  Borel-measurable versions of $\cE_1(\xi)$. Indeed, let $\psi$ be any
  version of $\cE_1(\xi)$; that is, $\psi=\cE_1(\xi)$ $P$-a.s.\ for all
  $P\in\cP_\bD$.  Then
  \[
  	\int \psi(\omega)\,P_x(d\omega) =
  	\int \cE_1(\xi)(\omega)\,P_x(d\omega) =
  	\1_A(x)
  \]
  for all $x\in[1,2]$. Therefore, as above, $\psi$ cannot be Borel-measurable.
  }
\end{example}

\begin{remark}
One may wonder how nasty a set $C$ is needed to obtain the conclusion of
Example~\ref{ex:nonBorel}.  A more careful inspection shows that we may
choose $C = C'\setminus(\Q\times\R)$, where $C'$ is a
closed subset of $[1,2]\times[1,2]$; indeed, $A=g(\N^\N)$ for a continuous
function $g$, see \cite[Cor.\,8.2.8]{Cohn.80}, while $\N^\N$ and
$[1,2]\setminus\Q$ are homeomorphic; cf.\ \cite[Prop.\,7.5]{BertsekasShreve.78}.
However, the counterexample fails to hold if $C$ itself is closed,
as the projection of a closed subset of $[1,2]\times[1,2]$ is always
Borel; see \cite[Prop.\ 7.32]{BertsekasShreve.78} for this and related
results.  In particular, while the necessity of considering non-Borel
functions is clearly established, it might still be the case that
$\cE_t(\xi)$ is Borel in many cases of interest.
\end{remark}

\subsection{$\cE_t$ Cannot Be Defined on a Linear Space}

Peng \cite{Peng.07} introduces nonlinear expectations abstractly as
sublinear functionals defined on a linear space of functions.
However, the upper semianalytic functions, while closed under many
natural operations (cf.\ \cite[Lem.\,7.30, p.\,178]{BertsekasShreve.78}),
do not form a linear space.  This is quite natural: since our
nonlinear expectations are defined as suprema, it is not too surprising
that their natural domain of definition is ``one-sided''.

Nonetheless, it is interesting to ask whether it is possible to
meaningfully extend our construction of the conditional $G$-expectations
$\cE_t$ to a linear space that includes all bounded Borel functions. The
following example shows that it is impossible to do so within the usual
axioms of set theory (ZFC).

\begin{example}\label{ex:nonLebesgue}
  {\textrm
  Once more, we fix $\bD=[1,2]$ in dimension $d=1$, and denote by
  $\cE_t(\xi)(\omega)=\sup_{P\in\cP_\bD} E^P[\xi^{t,\omega}]$
  the associated $G$-expectation.  Suppose that $\cE_t:\cH\to\cH$ has been
  defined on some space $\cH$ of random variables.
  We observe that every random variable $\xi\in\cH$ should, at the very
  least, be measurable with respect to the $\cP_{\bD}$-completion
  \[
    \cF^{\cP_\bD}=\bigcap_{P\in\cP_\bD}\cF^P,
  \]
  as this is the minimal requirement to make sense even of the expression
  $\cE_0(\xi)=\sup_{P\in\cP_\bD}E^P[\xi]$.
  Moreover, if $\xi$ is $\cF^{\cP_{\bD}}$-measurable and $\cE_t(\xi)$
  satisfies the representation~\eqref{eq:aggreg}, which is one of the
  main motivations for the constructions in this paper, then $\cE_t(\xi)$
  is \emph{a fortiori} $\cF^{\cP_\bD}$-measurable.

  The following is based on the fact that there exists a model
  (G{\"o}del's constructible universe) of the set theory ZFC
  in which, for some analytic
  set $A\subseteq [1,2]\times\mathbb{R}$, the projection $\pi A^c$
  of the complement $A^c$ on the second coordinate is Lebesgue-nonmeasurable;
  cf.\ \cite[Theorem\,3.11, p.\,873]{KanoveiLyubetskii.03}.
  Within this model, we choose a Borel-measurable
  function $f:[1,2]\to[1,2]\times\mathbb{R}$ such that $f([1,2])=A$, and let
  $C\subseteq [1,2]\times[1,2]\times\mathbb{R}$ be the graph of $f$.  Then,
  we define the Borel-measurable random variable
  \[
  	\xi=\1_C\big(\br{B}_3-\br{B}_2,\br{B}_2-\br{B}_1,\br{B}_1\big).
  \]
  Proceeding as in Example~\ref{ex:nonBorel}, we find that
  \[
  	\cE_2(\xi) = \1_A\big(\br{B}_2-\br{B}_1,\br{B}_1\big)\quad\mbox{and}\quad
  	\cE_1\big({-\cE_2(\xi)}\big) = \1_{\pi A^c}(\br{B}_1)-1.
  \]
  We now show that $\1_{\pi A^c}(\br{B}_1)$ is not $\cF^{\cP_\bD}$-measurable.
  To this end, let $P_x$ be the law of $\sqrt{x}W$, where
  $W$ is a standard Brownian motion, and define $P = \int_1^2 P_x\,dx$; note that
  $P\in\cP_\bD$. We claim that $\1_{\pi A^c}(\br{B}_1)$
  is not $\cF^P$-measurable.  Indeed, suppose to the contrary that
  $\1_{\pi A^c}(\br{B}_1)$ is $\cF^P$-measurable, then
  there exist Borel sets
  \[
    \Lambda_-\subseteq\big\{\br{B}_1\in\pi A^c\big\}\subseteq \Lambda_+
  \]
  such that $P(\Lambda_+\setminus \Lambda_-)=0$.  Therefore, if we define $h_\pm(x) =
  P_x[\Lambda_\pm]$, then
  we have $h_-\le \1_{\pi A^c}\le h_+$ pointwise and
  \[
    \int_1^2 \{h_+(x)-h_-(x)\}\,dx=P(\Lambda_+\setminus \Lambda_-)=0.
  \]
  As $\pi A^c$ is Lebesgue-nonmeasurable, this
  entails a contradiction.

  In conclusion, we have shown that $\cE_1(-\cE_2(\xi))$
  is not $\cF^{\cP_\bD}$-measurable.
  This rules out the possibility that $\cE_t:\cH\to\cH$, where $\cH$
  is a linear space that includes all bounded Borel-measurable
  functions.  Indeed, as $\xi$ is Borel-measurable, this would imply that
  $\xi$, $\cE_2(\xi)$, $\xi'=-\cE_2(\xi)$, and $\cE_1(\xi')$ are all in
  $\cH$, which is impossible as $\cE_1(\xi')$ is not
  $\cF^{\cP_\bD}$-measurable. We remark that, as in
  Example~\ref{ex:nonBorel}, modifying $\cE_t$ on a polar set cannot alter
  this conclusion.
  }
\end{example}

\subsection{Implications to the Aggregation Problem}

We have shown above that our particular construction of the conditional
$G$-expectation $\cE_t$ cannot be restricted to Borel-measurable functions
and cannot be meaningfully extended to a linear space.  However, \emph{a
priori}, we have not excluded the possibility that these shortcomings can
be resolved by an entirely different solution to the aggregation problem
\eqref{eq:aggreg}. We will presently show that this is impossible: the
above counterexamples yield direct implications to any potential
construction of the conditional $G$-expectation that satisfies
\eqref{eq:aggreg}.  We work again in the setting of the previous examples.

\begin{example}
Fix $\bD=[1,2]$ in dimension $d=1$.  In the present example, we suppose
that $\cE_t(\xi)$ is any random variable that satisfies the aggregation
condition~\eqref{eq:aggreg} for $\cP=\cP_\bD$ (that is, we do not assume that $\cE_t(\xi)$ is
constructed as in Theorem \ref{th:dpp}). Our claims are as follows:

\begin{enumerate}[topsep=3pt, partopsep=0pt, itemsep=1pt,parsep=2pt]
\item There exists a bounded Borel-measurable random variable $\xi$
such that every solution $\cE_1(\xi)$ to
the aggregation problem \eqref{eq:aggreg} is non-Borel.
\item It is consistent with ZFC that there exists a bounded
Borel-measurable random variable $\xi$ such that, for any solution
$\xi'=\cE_2(\xi)$ to the aggregation problem \eqref{eq:aggreg}, there
exists no solution to the aggregation problem for $\cE_1(-\xi')$.  In
particular, the aggregation problem \eqref{eq:aggreg} for $\cE_t(\psi)$
may admit no solution even when $\psi$ is universally measurable.
\end{enumerate}
Of course, these claims are direct generalizations of our previous
counterexamples.  However, the present formulation sheds light on the
inherent limitations to constructing sublinear expectations through
aggregation.

The proof of (i) follows directly from Example~\ref{ex:nonBorel}.
Indeed, let $\xi$ be as in Example \ref{ex:nonBorel}.  Then Theorem~\ref{th:dpp} proves the existence of one solution to the aggregation
problem~\eqref{eq:aggreg} for $\cE_1(\xi)$.  Moreover, it is immediate
from~\eqref{eq:aggreg} that any two solutions to the aggregation problem
can differ at most on a polar set.  But we have shown in Example~\ref{ex:nonBorel} that any version of $\cE_1(\xi)$ is non-Borel.  Thus the
claim~(i) is established.

For the proof of (ii), we define
$\xi$ and $A$ as in Example \ref{ex:nonLebesgue}; in particular, the
projection $\pi A^c$ is Lebesgue-nonmeasurable in a suitable model of
ZFC.  Let $\xi'$ be any solution to the aggregation problem
\eqref{eq:aggreg} for $\cE_2(\xi)$.  It follows as above that $\xi'$ and
\[
 \xi''=\1_A\big(\br{B}_2-\br{B}_1,\br{B}_1\big)
\]
differ at most on a polar
set. Note that, in general, if
there exists a solution $\cE_t(\psi)$ to the aggregation problem
\eqref{eq:aggreg} for $\psi$, and if $\psi'$ agrees with $\psi$ up to a
polar set, then $\cE_t(\psi)$ also solves the aggregation problem for
$\psi'$.  Therefore, it suffices to establish that there exists no solution
to the aggregation problem for $\cE_1(-\xi'')$.  In the following, we
suppose that $\cE_1(-\xi'')$ exists, and show that this entails a
contradiction.

Let $P_{x,y}$ be the law of $\sqrt{x}W_{\cdot\wedge 1}+
\sqrt{y}(W_{\cdot\vee 1}-W_1)$, where $W$ is a standard Brownian motion,
and let $P_x=P_{x,x}$.  Then $P_{x,y}\in\cP_\bD$ for every
$x,y\in[1,2]$, while $\br{B}_1=x$ and $\br{B}_2-\br{B}_1=y$ $P_{x,y}$-a.s.
Using \eqref{eq:aggreg}, we have
\begin{align*}
	\cE_1(-\xi'') &\ge {\mathop{\esssup}_{y\in[1,2]}}^{P_x}
	E^{P_{x,y}}[-\xi''|\cF_1] \\
	 &= \sup_{y\in[1,2]}\1_{A^c}(y,x)-1\\
     &=\1_{\pi A^c}(x)-1\quad P_x\mbox{-a.s.}
\end{align*}
for every $x\in[1,2]$.  On the other hand, we have
\begin{align*}
	\cE_1(-\xi'') &= \mathop{\esssup^{P_x}}_{P'\in\cP(1;P_x)}
	E^{P'}[\1_{A^c}(\br{B}_2-\br{B}_1,x)|\cF_1]-1 \\
	&\le
	\sup_{y\in[1,2]}\1_{A^c}(y,x)-1\\
    & =\1_{\pi A^c}(x)-1\quad P_x\mbox{-a.s.}
\end{align*}
for every $x\in[1,2]$.  Therefore, we conclude that
\[
	\cE_1(-\xi'') = \1_{\pi A^c}(x)-1 \quad P_x\mbox{-a.s.}
	\quad \mbox{for all }x\in[1,2].
\]
Define $P=\int_1^2 P_x\,dx$.  Then $P\in\cP_\bD$, and~\eqref{eq:aggreg} implies that $\cE_1(-\xi'')$ is $\cF^P$-measurable.
Therefore, there exist Borel functions $$H_-\le \cE_1(-\xi'')\le H_+$$
such that $E^P[H_+ - H_-]=0$.  Defining the Borel
functions $h_\pm(x)=E^{P_x}[H_\pm]$, we find that
$\int_1^2\{h_+(x)-h_-(x)\}\,dx=0$ and
\[
	h_-(x) \le \1_{\pi A^c}(x)-1 \leq h_+(x)\quad\mbox{for all}\quad x\in[1,2].
\]
As $\pi A^c$ is Lebesgue-nonmeasurable, this entails a
contradiction and we conclude that $\cE_1(-\xi'')$ cannot exist.
\end{example}

\newcommand{\dummy}[1]{}

\end{document}